\documentclass[11pt]{article}
\usepackage{enumerate}
\usepackage{amssymb}
\usepackage{amsthm}
\usepackage{comment}
\usepackage{amsmath}
\usepackage{graphicx}
\usepackage{setspace}
\usepackage[margin=1in]{geometry}
\usepackage{subfigure} 
\usepackage{float}  
\usepackage{color} 
\DeclareMathOperator{\Win}{Win}
\DeclareMathOperator{\best}{BEST}

\DeclareMathOperator{\ceil}{ceil}

\title{A Secretary Problem with a Sliding Window for Recalling Candidates}
\author{Shan-Yuan Ho and Abijith Krishnan}
\date{\today}
\newtheorem{theorem}{Theorem}[section]

\newtheorem{lemma}[theorem]{Lemma}

\begin{document}
\begin{titlepage}
\begin{singlespace}
\maketitle
\end{singlespace}

\begin{abstract}
\begin{singlespace}
The Sliding Window Secretary Problem allows a window of choices to the Classical Secretary Problem, in which there is the option to choose the previous $K$ choices immediately prior to the current choice. We consider a case of this sequential choice problem in which the interviewer has a finite, known number of choices and can only discern the relative ranks of choices, and in which every permutation of ranks is equally likely. We examine three cases of the problem: (i) the interviewer has one choice to choose the best applicant; (ii) the interviewer has one choice to choose one of the top two applicants; and (iii) the interviewer has two choices to choose the best applicant. The form of the optimal strategy is shown, the probability of winning as a function of the window size is derived, and the limiting behavior is discussed for all three cases. 
\end{singlespace}
\end{abstract}
\end{titlepage}
\newpage

\section{Introduction} \label{sec:Intro}
The classical secretary problem is a well-known decision theory problem, and the solution to the problem was first proven by Lindley (1961) and Dynkin (1963). Ferguson presents the problem as follows \cite{Ref1}: an interviewer sees a sequence of $N$ applicants one at a time, and must decide whether to accept or to reject an applicant immediately after seeing the applicant. The interviewer's decision is solely based on the relative ranks of previous applicants. No rejected applicant can be recalled, and the interviewer must make exactly one choice. Success occurs if the top applicant is chosen. For large $N$, the optimal strategy for the problem is the following threshold rule: reject a threshold number of applicants $\sim \frac{N}{e}$, and choose the next best applicant to appear. The interviewer wins with a probability that is approximately $\frac{1}{e} \approx 0.37$ with this strategy.

The classical secretary problem has many applications. For example, the classical secretary problem has been applied to the behavior of a person searching for the best gas station or best restaurant after agreeing to look through a fixed number of stores. In fact, Seale and Rapoport \cite{Ref15} found that when presented with a scenario equivalent to the secretary problem, a majority of the fifty people in the study used a threshold rule, with the deviation of their threshold from the optimal threshold accounted for by an additional cost for the time spent before making a decision. The classical secretary problem can also be applied to data stream mining, in which a sampler collects and analyzes data real-time from sensors, computer programs, or web traffic. For example, Girdhar and Dudek \cite{Ref12} used a version of the secretary problem to model the optimal strategy for a robot probing a landscape to find the best location to place a sensor by taking a large number of pictures and assigning a score to each picture based on the variety of colors. In addition, Das \cite{Ref16} experimentally tested an algorithm that used the optimal strategy from a secretary problem to collect plankton that best represented a species responsible for toxic algal blooms.

However, the classical secretary problem does not perfectly apply to the above situations. Realistically, the interviewer would have more time to decide on an applicant. Similarly, a person deciding while driving whether to stop at a particular gas station or restaurant would have some ability to backtrack and choose a previous store. As a result, Seale and Rapoport's findings \cite{Ref15} could be extended to more realistic scenarios if the decision-maker was given more time to make a decision. In addition, providing the decision-maker with more time would be beneficial for data stream mining. Ajtai, Megiddo, and Waarts \cite{Ref10} note that the classical secretary problem could be applied to choosing records of highest interest from a large data set or choosing images from a large digital library, but also that allowing for limited backtracking would make the application more realistic. We thus consider a secretary problem proposed in 2009 by Beccheti and Koutsoupias \cite{Ref9} in which the interviewer can keep the last $K$ applicants as possible choices and hence has a sliding window of size $K$.

In this paper, we study two cases of the Sliding-Window Secretary Problem with a fixed window size of $K$: a payoff of 1 for choosing the best applicant and 0 otherwise, \textit{the Best-1 case}; and a payoff of 1 for choosing one of the top 2 applicants and 0 otherwise, \textit{the Best-2 case}. We additionally study the \textit{2-Choice case}, in which the interviewer can choose two applicants and wins only if either of them are the best applicant overall. We discuss previous variations of the secretary problem in Section \ref{Ap-1}. Then, for each case of the Sliding-Window Secretary Problem, we outline the effect of changes in the window size on the probability of winning, analyze special cases for the window size, provide a recursive solution that computes the probability of winning, and finally analyze limiting cases of the recursive solutions. We discuss \textit{the Best-1 case} in Section \ref{Best1Case}, \textit{the Best-2 case} in Section \ref{Best2}, \textit{the 2-Choice case} in Section \ref{2Choice}, and concluding remarks and future directions in Section \ref{concluding}. 

\section{Background and History}  \label{Ap-1}
The classical secretary problem's solution was first proven by Lindley in 1961 \cite{Ref6} and Dynkin in 1963 \cite{Ref7}. Their results are discussed in Section \ref{sec:Intro}. Many variations of the secretary problem have been studied in the past 60 years. We highlight a few variations, but not all. \\

\noindent
\textit{Finding the Best Applicant with Multiple Choices}: Gilbert and Mosteller \cite{Ref2} offer a variation of the secretary problem in which an interviewer can choose $r$ people from a pool of $N$ applicants and wins if one of the $r$ people is the best applicant. For the case $r=2$, they show that for large $N$ there are two optimal thresholds for each choice, $\frac{N}{e}$ and $\frac{N}{e^{1.5}}$, and an optimal probability of winning of $e^{-1}+e^{-1.5}$.  They extend their analysis to general $r$ and find the asymptotic behavior of the problem. Their results are numerically derived and cannot be explained analytically. \\

\noindent
\textit{Choosing the Best or the Second Best Applicant}:
Gilbert and Mosteller also analyze a secretary problem in which the interviewer wins if the best or second best applicant is chosen. There are two threshold values, $d_{1}^{*}$ and $d_{2}^{*}$ in the optimal strategy. The interviewer passes $d_{1}^{*}$ applicants, then chooses the next applicant better than all previous applicants. If an applicant has not been chosen by index $d_{2}$, the interviewer now chooses the best or second best applicant out of all previous applicants. They find $d_{1}^{*} \approx 0.347N$ and $d_{2}^{*} = \frac{2N}{3}$ and that the optimal probability of success is approximately $0.574$ for $N$ large. This problem is discussed in more detail in the context of the Sliding Window Secretary Problem in Section \ref{Best2}. \\

\noindent
\textit{The Ability to Recall a Candidate with a Fixed Probability}:
Another variation allows the interviewer to recall a previous applicant with a fixed probability, as seen in Petrucelli \cite{Ref8}. The applicant currently being interviewed can accept the job with a probability of $q \leq 1$, and if the interviewer decides to choose some previous applicant, the probability of the previous applicant accepting the job is $p \leq q$.  While the probability of winning increases as $p$ increases, the probability of winning with a nonzero value of $p$ approaches the probability of winning with $p=0$ for $N$ large. \\

\noindent
\textit{The Best Expected Rank}:
The payoff for the secretary problem is now the value of the rank of the applicant, and the interviewer seeks to minimize the expected rank. Chow, Moriguti, Robbins, and Samuels \cite{Ref3} show that as $N$ approaches infinity, the best expected rank approaches $3.87$. \\

\noindent
\textit{Maximizing the Expected Rank with the Ability to Choose More than One Applicant}:
Ajtai, Megiddo, and Waarts \cite{Ref10} extend the work of Chow \textit{et al.} by looking at the best expected rank, given $r$ choices. They devised algorithms for this process and found that the best expected sum of the $z^{\mbox{\tiny{th}}}$ power of the ranks of the $r$ choices is between $\frac{r^{z+1}}{z+1}+O(k^{z})$ and $\frac{r^{z+1}}{z+1}+C(z) r^{z+0.5} \log r$, where $C(z)$ is a value that depends on $z$. \\

\noindent
\textit{Recalling Previous Candidates}:
Using the same payoff as Chow \textit{et al.}, Goldys considered the problem in which an interviewer tries to achieve the best expected rank with a sliding window of size $2$. He showed that as $N$ approaches $\infty $, the best expected rank approaches approximately $2.57$ \cite{Ref5}.

\section{The Sliding-Window Problem: The \emph{Best-1 Case}} \label{Best1Case}
In this section we study the Secretary Problem with a Sliding Window of choices. The interviewer knows the number of applicants $N$ and can choose any of the last $K$ applicants, for some fixed $K$. Let the \textit{index} of an applicant be its position in a sequence of applicants. Then, we define the window to be the set of $K$ consecutive applicants that the interviewer can choose from, such that the smallest index in the window contains the applicant who must be rejected or accepted before a new applicant can be interviewed. Each applicant has a distinct rank and is seen sequentially in a randomized order. Let $R(m)$ be a bijective function from $[1,N]$ to $[1,N]$ that returns the absolute \textit{rank} of the applicant at index $m$, with $1$ representing the best rank. However, the interviewer can only rank the applicants seen so far and thus can only discern relative ranks. We seek the optimal strategy for finding the best applicant, in which the payoff is 1 for choosing the best and 0 otherwise. 

When $K=1$, the problem is identical to the Classical Secretary Problem. While Lindley (1961) and Dynkin (1963) have proven the secretary problem earlier, we refer to the 1966 paper of Gilbert and Mosteller \cite{Ref2}. Gilbert and Mosteller derived the optimal strategy and the optimal thresholds. For large $N$, it is optimal to pass over approximately $\frac{N}{e}$ applicants and then choose the next best applicant. This gives $\Pr(\Win)\approx \frac{1}{e}$. We present their proofs in \ref{Ap0} for intuition for later proofs.

Let a \textit{candidate} be an applicant that provides a strictly nonzero probability of winning from the perspective of the interviewer if chosen. Specifically in the \textit{Best-1 case}, a candidate is located in the current window and has the best rank out of all seen applicants. Because rejecting applicants who are not candidates does not reduce the probability of winning, we adopt a \textit{sliding rule} in which applicants are interviewed to advance the window until a candidate is at the smallest index of the window. We now extend the optimal strategy of the Classical Secretary Problem and show that in order to maximize the probability of winning the interviewer must reject a particular number of applicants and then accept the first candidate to appear, due to the following concept from Gilbert and Mosteller (1966) \cite{Ref2}:
we choose candidate $i$ in our window if and only if 
\begin{equation} \label{InEq}
\Pr(\Win \mid \mbox{Choosing Candidate } i) > \Pr(\Win \mid \mbox{Rejecting Candidate } i),
\end{equation} 
because the interviewer only chooses an applicant that provides a higher probability of winning if chosen than if rejected. 

\begin{theorem}\label{Cor1}
The optimal strategy for the Best-1 case of the Sliding-Window Secretary Problem is to reject the first $d^*$ applicants for some integer $d^*\geq 0$, and then to choose the next candidate with the sliding rule.
\end{theorem}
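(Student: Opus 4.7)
The plan is to adapt the Gilbert--Mosteller threshold argument used in the classical case to the sliding-window setting, invoking inequality (\ref{InEq}) as the decision rule. The goal is to show that, as a function of the index $i$ at which a candidate appears at the smallest position of the window (with the sliding rule in force), the probability of winning if we choose is non-decreasing in $i$, while the probability of winning if we reject and play optimally thereafter is non-increasing in $i$. The crossover between these two curves will then furnish the desired threshold $d^*$.

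First I would reduce all binding decisions to candidate moments. Under the sliding rule, the interviewer passively advances the window until a candidate occupies its smallest index $i$; at that moment $n = \min(i+K-1,\,N)$ applicants have been seen and the candidate is the best-so-far among them. Since rank orderings are exchangeable, conditioning on ``applicant $i$ is the best of the first $n$'' gives
\[
\Pr(\Win \mid \text{Choose candidate at } i) \;=\; \frac{n}{N} \;=\; \frac{\min(i+K-1,\,N)}{N},
\]
which is manifestly non-decreasing in $i$.

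Next I would prove that $W_i := \Pr(\Win \mid \text{Reject candidate at } i,\; \text{play optimally afterwards})$ is non-increasing in $i$. This is the content of a Bellman-style recursion: writing $V_j = \max\{\min(j+K-1,N)/N,\; W_j\}$ and letting $T_i > i$ be the (random) index at which the sliding process next places a candidate at the smallest window position, one has $W_i = \mathbb{E}[V_{T_i}]$. Monotonicity should then follow by a dominance argument---any feasible continuation from the reject-state at index $i+1$ can be simulated from the reject-state at index $i$ by harmlessly interviewing one extra applicant first---so the optimal value can only weakly decrease as $i$ grows. With both monotonicities in hand, the accept and reject curves cross at most once; letting $d^*$ be the largest index at which rejection weakly dominates acceptance, (\ref{InEq}) immediately yields the threshold strategy described in the theorem.

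The main obstacle is rigorously establishing the monotonicity of $W_i$. Unlike the classical $K=1$ case, rejecting the best-so-far can leave the window without any candidate at all, and the process must slide further until a new best-so-far enters the window; the distribution of this arrival time depends in a delicate way on both $K$ and on the remaining horizon $N - i - K + 1$. Making the Bellman dominance rigorous---equivalently, constructing a coupling between the reject-states at indices $i$ and $i+1$ so that any winning continuation from the later state lifts to a winning continuation from the earlier one---is where the real work of the proof lies.
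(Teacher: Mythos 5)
Your proposal follows essentially the same route as the paper: both establish that $\Pr(\Win \mid \mbox{Choosing Candidate } i) = \frac{i+K-1}{N}$ increases in $i$ while the probability of winning after rejecting candidate $i$ decreases in $i$, then extract the threshold $d^*$ from the single crossing of these two curves via Inequality~\eqref{InEq} (the paper additionally checks the crossing set is nonempty by evaluating at $i=N-K$). The monotonicity of the reject-value that you flag as the real remaining work is precisely the step the paper dispatches in one sentence --- by observing that the probability that the best applicant lies in $[i+K,N]$ decreases in $i$ --- so your plan contains everything the paper's own argument does.
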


\begin{proof}
Let $S=\{i \in [1,N] \mid \mbox{ Inequality \eqref{InEq} holds}\}$. Because the interviewer has seen $i+K-1$ applicants when the window starts at $i$, $\Pr(\Win \mid \mbox{Choosing Candidate } i) = \frac{i+K-1}{N}$. Because the probability that the best applicant lies between $i+K$ and $N$ decreases as $i$ increases, $\Pr(\Win \mid \mbox{Rejecting Candidate } i)$ decreases in $i$. If applicant $N-K$ is a candidate, because the last applicant is the best with probability $\frac{1}{N}$, the probability of winning and rejecting candidate $N-K$ is $\frac{1}{N}$. A sketch of $\Pr(\Win \mid \mbox{Choosing Candidate } i)$ and $\Pr(\Win \mid \mbox{Rejecting Candidate } i)$ is shown in Figure \ref{T21} to provide intuition for the remaining part of the proof. Thus Inequality \eqref{InEq} holds for $i=N-K$. Because $\Pr(\Win \mid \mbox{Choosing Candidate } i)$ strictly increases and $\Pr(\Win \mid \mbox{Rejecting Candidate } i)$ decreases in $i$, all elements in $S$ are consecutive integers. Because $S$ is nonempty, there is a least element in $S$, which we call $d^{*}+1$. Thus there is a $d^*$ such that the first $d^*$ applicants should be rejected, and the first candidate after $d^*$ should be accepted. 
\end{proof}

\begin{figure} [h]
\centering
\includegraphics[width=\textwidth]{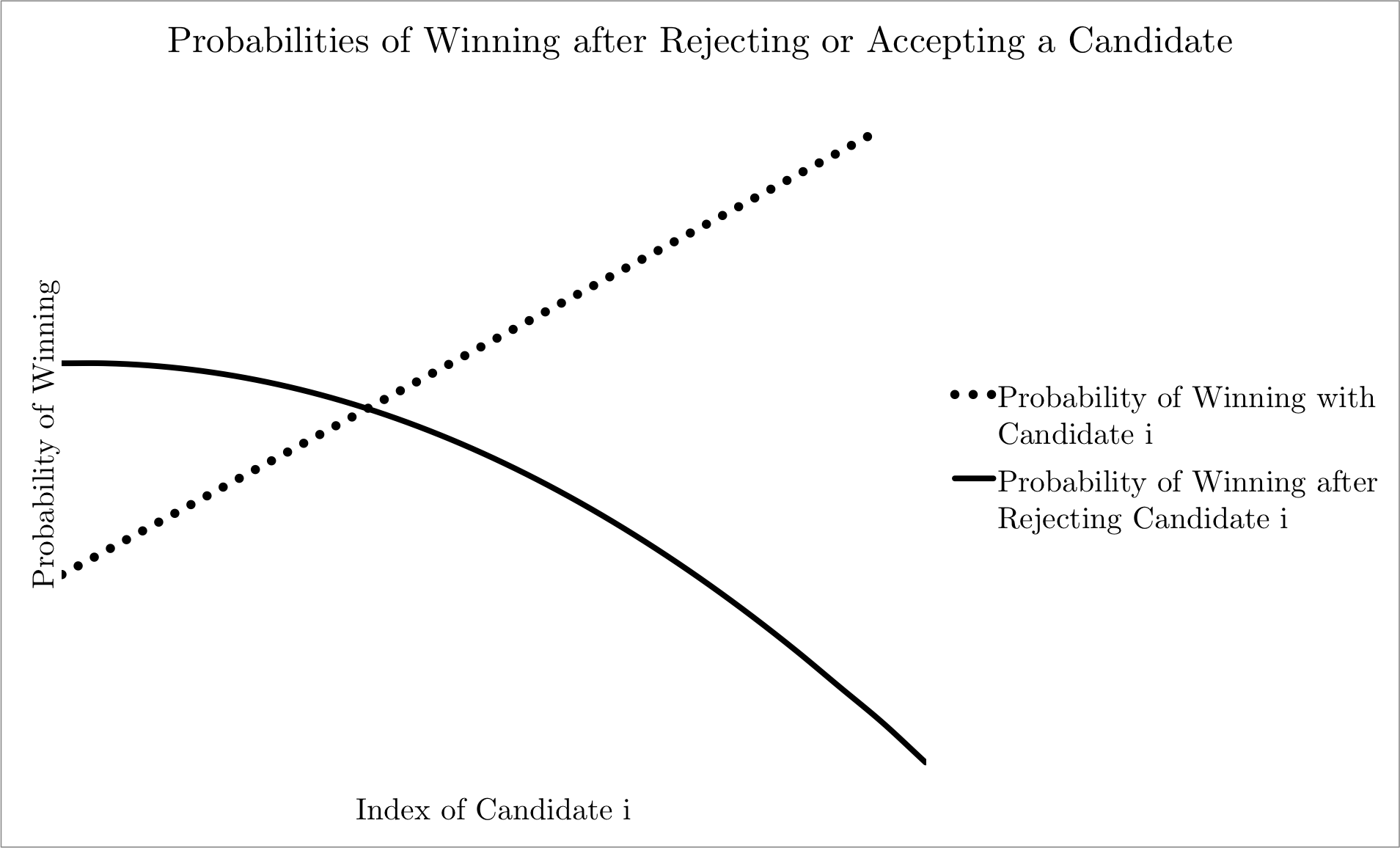}
\caption{A Pictorial Representation of Theorem \ref{Cor1}. The probability of winning with candidate $i$ is strictly increasing, while the probability of winning after rejecting candidate $i$ is decreasing. The threshold value occurs where the two lines meet.}
\label{T21}
\end{figure}

By the definition of $d^*$ in Theorem \ref{Cor1}, the \textit{optimal strategy for the Best-1 case} is to reject the first $d^*$ applicants and use the sliding rule to accept the next candidate. Note that even though the first $d^*$ applicants are skipped, their relative ranks are still used to determine if an applicant is a candidate.

\subsection{Special Cases for K} \label{casework}

We first characterize the probability of winning for $K=2$ and $K=N-1$ in this section. 

For a large window size of $K=N-1$, the only possible 
threshold values are $0$ or $1$. 
Suppose $d=1$. Failure occurs only when the best applicant is skipped, i.e., $R(1)=1$. This event occurs with probability $\frac{1}{N}$. Now suppose $d=0$. Failure only occurs if the second best is at index 1 and the best is at index $N$, i.e., $R(1)=2$ and $R(N)=1$. This event occurs with probability $(\frac{1}{N})(\frac{1}{N-1})$. Thus, $d^{*}=0$.

Now consider a small window size of $K=2$. Let $j$ be the index of the best candidate, and $d$ be an arbitrary threshold. If $j\leq d$, the interviewer loses, and if  $j=d+1$ or $j=d+2$, the interviewer wins. If $j>d+2$, the interviewer wins if there are no candidates before $j$. If there is an applicant $i>d$ better than all previous applicants, then applicant $i+1$ must be better than applicant $i$ so that applicant $i$ is not a candidate. Thus the sequence of applicants between $i$ and $j-1$ must form a sequence of strictly improving ranks. The probability that $R(x)$ is better than the rank of all preceding applicants is $\frac{1}{x}$, and the probability that $i$ is the first applicant better than the first $d$ applicants is $\frac{d}{i-1}$. Thus, if $i$ is the first applicant better than the first $d$ applicants, the probability that there are no candidates before $j$ is $\frac{d}{i-1} \prod _{x=i}^{j-1} \frac{1}{x} = \frac{d(i-2)!}{(j-1)!}$. Since each value of $j$ occurs with probability $\frac{1}{N}$, and $j=d+1$ and $j=d+2$ guarantee wins, when we sum the probabilities for all values of $i$ and $j$, we find
\begin{equation} \label{Eq7}
\Pr(\Win \mid d) = \frac{2}{N} + \frac{d}{N}  \sum_{j=d+3}^{N} \frac{\sum_{i=d+1}^{j} (i-2)!}{(j-1)!}.
\end{equation}

Values of the summation in Equation \eqref{Eq7} for $N=100$ and various values of $d$ are in \ref{Ap2}.

Analyzing the results of a simulation for small values of $N$ and $K$ in \ref{Ap1} suggests that as $K$ increases for fixed $N$, $\Pr(\Win)$ increases and $d^*$ decreases. Therefore, we first prove that $\Pr(\Win)$ strictly increases as $K$ increases for fixed $N$.

\begin{lemma}  \label{Lemma2}
Let $d^{*}_{K}$ and $d^{*}_{\kappa}$  be the optimal thresholds for windows $K$ and $\kappa$ respectively. If $\kappa<K$, then $\Pr(\Win \mid \kappa, d^{*}_{\kappa}) < \Pr(\Win \mid K, d^{*}_{K})$. 
\end{lemma}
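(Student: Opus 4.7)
The plan is a coupling-plus-construction argument, showing that anything achievable with a window of size $\kappa$ is achievable (and sometimes strictly better) with a window of size $K>\kappa$.

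First, I would compare the window-$\kappa$ strategy with threshold $d^{*}_{\kappa}$ to the window-$K$ strategy using the \emph{same} threshold $d^{*}_{\kappa}$, pointwise in the random permutation. The key observation is that a position $i$ is a window-$K$ candidate (i.e., $R(i)=\min(R(1),\dots,R(i+K-1))$) only if it is also a window-$\kappa$ candidate, since the minimum over a superset cannot exceed the minimum over a subset. Hence on any given permutation, the window-$K$ strategy stops no earlier than the window-$\kappa$ strategy. Now if the window-$\kappa$ strategy wins, it stops at the unique rank-$1$ position $j^{*}:=R^{-1}(1)$; then $j^{*}$ is trivially a window-$K$ candidate, and no $i<j^{*}$ can be a window-$K$ candidate (else it would be a window-$\kappa$ candidate too, contradicting that window $\kappa$ first stops at $j^{*}$), so window $K$ also stops at and selects $j^{*}$. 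This yields $\Pr(\Win\mid K,d^{*}_{\kappa})\ge \Pr(\Win\mid\kappa,d^{*}_{\kappa})$.

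Next, I would upgrade to strict inequality by exhibiting a permutation on which window $K$ wins but window $\kappa$ loses. Consider $R(d^{*}_{\kappa}+1)=2$, $R(d^{*}_{\kappa}+\kappa+1)=1$, and any bijection between the remaining positions and the remaining ranks $\{3,\dots,N\}$ (assuming the generic case $d^{*}_{\kappa}+\kappa+1\le N$). The window-$\kappa$ strategy stops at $d^{*}_{\kappa}+1$ because rank $2$ is the minimum over its $\kappa$-sized view (the unique better rank lies one step beyond that view), so it picks rank $2$ and loses. The window-$K$ strategy, by contrast, already sees the rank-$1$ applicant inside its window, so $d^{*}_{\kappa}+1$ is not a window-$K$ candidate; it continues and selects the rank-$1$ applicant at $d^{*}_{\kappa}+\kappa+1$. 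Since such permutations occur with positive probability, $\Pr(\Win\mid K,d^{*}_{\kappa})>\Pr(\Win\mid\kappa,d^{*}_{\kappa})$, and by the optimality of $d^{*}_{K}$ we conclude $\Pr(\Win\mid K,d^{*}_{K})\ge \Pr(\Win\mid K,d^{*}_{\kappa})>\Pr(\Win\mid\kappa,d^{*}_{\kappa})$.

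The step I anticipate as the main obstacle is the boundary case $d^{*}_{\kappa}+\kappa+1>N$, which amounts to the extreme threshold $d^{*}_{\kappa}=N-\kappa$ and makes the explicit construction above undefined. In this regime the window-$\kappa$ strategy reduces to ``view all $N$ applicants and then choose the best of the last $\kappa$,'' giving $\Pr(\Win\mid\kappa,d^{*}_{\kappa})=\kappa/N$; I would handle this case by noting that the window-$K$ strategy with analogous threshold $N-K$ yields $K/N>\kappa/N$, which is already enough for strict inequality.
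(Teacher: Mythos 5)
Your proposal is correct and follows essentially the same route as the paper: the same chain $\Pr(\Win\mid\kappa,d^{*}_{\kappa})<\Pr(\Win\mid K,d^{*}_{\kappa})\le\Pr(\Win\mid K,d^{*}_{K})$, with the middle inequality obtained by showing every winning sequence for window $\kappa$ also wins for window $K$ and exhibiting a sequence that wins only for window $K$. Your version merely makes the paper's terse dominance claim precise (via the observation that window-$K$ candidates form a subset of window-$\kappa$ candidates) and writes out the strict-improvement permutation explicitly, including the boundary case the paper does not discuss.
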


\begin{proof}
We show that $\Pr(\Win \mid \kappa, d^{*}_{\kappa}) < \Pr(\Win \mid K, d^{*}_{\kappa}) \leq \Pr(\Win \mid K, d^{*}_{K})$. Because $d^*_K$ is optimal for a window size of $K$, $\Pr(\Win \mid K, d^{*}_{\kappa}) \leq \Pr(\Win \mid K, d^{*}_{K})$. We now prove that $\Pr(\Win \mid \kappa, d^{*}_{\kappa}) < \Pr(\Win \mid K, d^{*}_{\kappa})$. We define $j$ to be the index of the best applicant.

A window of $K$ provides the interviewer with at least the same winning sequences as a window of $\kappa$, for the same $d^*_{\kappa}$, because the interviewer can ignore the last $K-\kappa$ applicants in the window. In addition, there exists a sequence in which a candidate appears before $j-\kappa+1$, but after $j-K$. Therefore, with this sequence, the interviewer loses with a window of $\kappa$ but wins with a window of $K$. Thus $\Pr(\Win \mid \kappa, d^{*}_{\kappa}) < \Pr(\Win \mid K, d^{*}_{\kappa})$.
\end{proof}
Now we prove that $d^*_K$ decreases as $K$ increases for fixed $N$. 
\begin{lemma}  \label{Lemma3}
Let $d^{*}_{K}$ and $d^{*}_{\kappa}$  be the optimal passing thresholds for windows $K$ and $\kappa$, respectively, and $j$ be the index of the best applicant. If $\kappa<K$, then $d^{*}_{K} \leq d^{*}_{\kappa}$. 
\end{lemma}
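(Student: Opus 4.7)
The plan is a proof by contradiction using the characterization of $d^*$ from Theorem \ref{Cor1}. Suppose $d^*_K > d^*_\kappa$ for some $\kappa < K$, and set $i := d^*_\kappa + 1$. By Theorem \ref{Cor1}, inequality \eqref{InEq} holds at $i$ under window $\kappa$ (since $i$ is the least element of the corresponding set $S$), so
\begin{equation*}
\frac{i + \kappa - 1}{N} > \Pr(\Win \mid \text{Reject Candidate } i,\, \kappa).
\end{equation*}
Meanwhile, $i \leq d^*_K$ by assumption, so inequality \eqref{InEq} fails at $i$ under window $K$:
\begin{equation*}
\frac{i + K - 1}{N} \leq \Pr(\Win \mid \text{Reject Candidate } i,\, K).
\end{equation*}
Abbreviating $R_\omega(i) := \Pr(\Win \mid \text{Reject Candidate } i,\, \omega)$ and subtracting, the two inequalities force the necessary condition $R_K(i) - R_\kappa(i) > \frac{K - \kappa}{N}$, which I denote $(\star)$.

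The contradiction will come from establishing the reverse bound $R_K(i) - R_\kappa(i) \leq \frac{K - \kappa}{N}$. The argument is a sequence-counting coupling in the style of Lemma \ref{Lemma2}: after the rejection at $i$, any rank-permutation that wins for the $K$-continuation but loses for the $\kappa$-continuation must place the best applicant in one of the $K - \kappa$ ``enlargement'' positions $i + \kappa,\, i + \kappa + 1,\, \ldots,\, i + K - 1$, since these are precisely the positions caught by the larger window immediately after rejection but missed by the smaller. Each such position holds the best applicant with probability $\frac{1}{N}$, so the total excess is at most $\frac{K - \kappa}{N}$, contradicting $(\star)$.

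The main obstacle is that the two continuations use different thresholds: window $\kappa$ is past $d^*_\kappa$ and so accepts the very next candidate, while window $K$ still has $i \leq d^*_K$ and keeps rejecting through $d^*_K$ before accepting. The coupling must therefore track both the extra sliding room gained at $i$ by the larger window and the countervailing fact that being a ``candidate'' under window $K$ is a strictly stronger event than under window $\kappa$ (eliminating spurious intermediate candidates and making the $K$-continuation more discerning). Bookkeeping both effects --- for instance by expressing each $R_\omega(i)$ as a sum over the position $j$ of the best applicant and comparing termwise in the spirit of the winning-sequence analysis preceding Equation \eqref{Eq7} --- shows that the only net gain attributable to enlargement is the $(K - \kappa)/N$ contribution from the enlargement positions, yielding the desired bound.
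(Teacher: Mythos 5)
Your setup is sound and is essentially the contrapositive of the paper's own argument: the paper also works at the index $i=d^{*}_{\kappa}+1$, notes that $\Pr(\Win \mid \mbox{Choosing } i,\ K)=\frac{i+K-1}{N}>\frac{i+\kappa-1}{N}$ while $\Pr(\Win \mid \mbox{Rejecting } i,\ K)\leq \Pr(\Win \mid \mbox{Rejecting } i,\ \kappa)$, and chains these with Inequality \eqref{InEq} for $\kappa$ to conclude that \eqref{InEq} already holds for $K$ at that index. Your condition $(\star)$ is exactly the negation of that chain, and your target bound $R_K(i)-R_\kappa(i)\leq\frac{K-\kappa}{N}$ is a weaker consequence of the paper's claim $R_K(i)\leq R_\kappa(i)$, so the reduction itself is fine.

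The gap is in your justification of the reverse bound, and it is twofold. First, the central combinatorial claim is backwards. $R_K(i)$ conditions on $i$ being a candidate \emph{under window $K$}, i.e.\ $i$ is the best of the first $i+K-1$ applicants; under that conditioning the best applicant overall is either at $i$ (a loss once $i$ is rejected) or at some position $\geq i+K$, so it can never occupy the ``enlargement'' positions $i+\kappa,\dots,i+K-1$ and those positions contribute nothing to $R_K(i)$. Under window $\kappa$'s conditioning the best \emph{can} sit there, and when it does it is a guaranteed win for the $\kappa$-continuation (it becomes a candidate the moment it is seen, past $d^{*}_{\kappa}$); so the enlargement positions favor $\kappa$, not $K$. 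The genuine excess of $K$-wins comes instead from permutations with the best at positions $\geq i+K$ where the $\kappa$-continuation stops at a spurious intermediate candidate that the wider window either disqualifies or still covers --- the mechanism of Lemma \ref{Lemma2} --- and these are not confined to any $K-\kappa$ positions, so the claimed $\frac{K-\kappa}{N}$ cap does not follow. Second, your final paragraph correctly names the real difficulties (the two sides condition on different candidate events and use different continuation thresholds) but resolves them only by asserting that ``bookkeeping both effects shows'' the bound; no such bookkeeping is carried out. To close the argument you need an honest comparison of the two conditional rejection probabilities --- either the paper's direct monotonicity $\Pr(\Win \mid \mbox{Rejecting } i,\ K)\leq \Pr(\Win \mid \mbox{Rejecting } i,\ \kappa)$, justified by the fact that after rejecting under the larger window the best applicant remains unseen with smaller probability, or your weaker bound derived by summing over the position $j$ of the best applicant as in the derivation of Equation \eqref{Eq7}.
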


\begin{proof} From the proof of Theorem \ref{Cor1}, if applicant $i$ is a candidate, $\Pr(\Win \mid \mbox{Choosing }i \mbox{, } K)>\Pr(\Win \mid \mbox{Choosing }i \mbox{, } \kappa)$. Because $j \in [(i+\kappa), N]$ occurs with higher probability than $j \in [(i+K), N]$,  $\Pr(\Win \mid \mbox{Rejecting }i \mbox{, } K)\leq \Pr(\Win \mid \mbox{Rejecting }i \mbox{, } \kappa)$. By Theorem \ref{Cor1}, the smallest integer $i$ such that Inequality \eqref{InEq} holds is $(d^*_{\kappa}+1)$. It follows from the previous inequalities that Inequality \eqref{InEq} holds for a window size of $K$ at index $(d^*_{\kappa}+1)$. Therefore, because $d^{*}_{K}+1$ is the least index for which Inequality \eqref{InEq} holds if applicant $d^{*}_{K}+1$ is a candidate, $d^{*}_{K} \leq d^{*}_{\kappa}$. 
\end{proof}

Finally we present the exact and asymptotic solutions to the secretary problem for a window size of $K\geq \frac{N}{2}$.

\begin {comment}
\begin{theorem}  \label{Thm2}
For $n$ even, if $k=\frac{n}{2}$, then the optimal $d^{*}=0$ or the optimal $d^{*}=1$. 
\end{theorem}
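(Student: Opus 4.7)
The plan is to invoke the machinery of Theorem \ref{Cor1} and show that Inequality \eqref{InEq} holds at $i = 2$. Since the proof of Theorem \ref{Cor1} identifies $d^*+1$ as the least $i$ for which Inequality \eqref{InEq} holds, verifying the inequality at $i = 2$ would immediately yield $d^* + 1 \leq 2$, hence $d^* \in \{0,1\}$. So the entire task reduces to comparing $\Pr(\Win \mid \mbox{Choosing Candidate } 2)$ with an upper bound on $\Pr(\Win \mid \mbox{Rejecting Candidate } 2)$ under the hypothesis $K = N/2$.

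For the accepting side, the formula $\Pr(\Win \mid \mbox{Choosing Candidate } i) = \tfrac{i+K-1}{N}$ derived in the proof of Theorem \ref{Cor1} specializes at $i = 2$, $K = N/2$ to $\tfrac{K+1}{N} = \tfrac{1}{2} + \tfrac{1}{N}$. For the rejecting side, I will use the fact that applicant $2$ being a candidate forces applicant $2$ to have the best rank among positions $[1, K+1]$, so the overall best lies either at position $2$ itself or at some position in $[K+2, N]$. Winning after rejecting candidate $2$ therefore requires the best to be in $[K+2, N]$, and a short Bayes/symmetry computation (using $\Pr(\mbox{cand } 2) = \tfrac{1}{K+1}$ and $\Pr(\mbox{best at } 2) = \tfrac{1}{N}$) yields
\begin{equation*}
\Pr(\Win \mid \mbox{Rejecting Candidate } 2) \;\leq\; \Pr(\mbox{best in } [K+2, N] \mid \mbox{cand } 2) \;=\; \frac{N - K - 1}{N} \;=\; \frac{1}{2} - \frac{1}{N}.
\end{equation*}
Since $\tfrac{1}{2} + \tfrac{1}{N} > \tfrac{1}{2} - \tfrac{1}{N}$, Inequality \eqref{InEq} holds at $i = 2$, which completes the argument.

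The main subtlety is recognizing that the crude ``best-must-still-be-recoverable'' upper bound on the rejecting side is already strong enough to conclude; the gap between the two sides of \eqref{InEq} at $i = 2$ is only $\tfrac{2}{N}$, and it is precisely the choice $K = N/2$ that makes this bound go through. For $K$ somewhat smaller than $N/2$ the crude bound would not suffice, and one would have to estimate the probability of successfully picking the best after rejecting candidate $2$. Note also that this argument only confines $d^*$ to $\{0,1\}$ and does not identify which of the two is optimal; to decide between them one would compare $\Pr(\Win \mid d = 0)$ and $\Pr(\Win \mid d = 1)$ directly for $K = N/2$, in the spirit of the $K = N - 1$ computation done earlier in the section.
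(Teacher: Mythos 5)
Your argument is correct, but it reaches the conclusion by a different route than the paper. The paper's proof conditions on whether applicant $1$ is a candidate and evaluates the comparison at index $1$ \emph{exactly}: choosing a candidate at index $1$ wins with probability $\frac{K}{N}=\frac{1}{2}$, and rejecting it also wins with probability exactly $\frac{1}{2}$, because when $K=\frac{N}{2}$ there are no candidates in $[2,K]$ after such a rejection and any candidate at an index $j\geq K+1$ must beat all of the first $j+K-1\geq N$ applicants, i.e.\ is automatically the best overall; so the best is recovered precisely when it lies in $[K+1,N]$. This exact tie is what shows that $d^*=0$ and $d^*=1$ are \emph{both} optimal, matching the ``$0$ or $1$'' entries in the appendix table (the case where applicant $1$ is not a candidate is handled by a reduction to a shorter sequence). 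Your proof instead verifies Inequality \eqref{InEq} at $i=2$ with a one-sided bound, comparing $\frac{K+1}{N}=\frac{1}{2}+\frac{1}{N}$ against the crude ceiling $\frac{N-K-1}{N}=\frac{1}{2}-\frac{1}{N}$ on the rejecting side, and then invokes the threshold structure of Theorem \ref{Cor1} to conclude $d^*+1\leq 2$. That is sufficient for the disjunction as stated, and it has the virtue of never needing to analyze the continuation strategy after a rejection; the price is that you learn only $d^*\leq 1$ and not that the two thresholds are tied. Both arguments ultimately rest on the same comparison of $\frac{i+K-1}{N}$ against the probability that the best applicant lies beyond the seen prefix; they simply apply it at different indices, with the paper extracting an equality at $i=1$ and you extracting a strict inequality at $i=2$.
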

\begin{proof}
Let $k=\frac{n}{2}$. There are two cases to investigate: $R(1) = \best(\{R(x) \mid x \in [1,k]\})$ and $R(1) \neq \best(\{R(x) \mid x \in [1,k]\})$. 

If $R(1) = \best(\{R(x) \mid x \in [1,k]\})$, then the probability of winning given that you skip Applicant 1 is $\frac{1}{2}$, because there is a $\frac{1}{2}$ chance that Applicant 1 is the best applicant. So skipping Applicant 1 and not skipping Applicant 1 would give the same chance of success. 

If $R(1) \neq \best(\{R(x) \mid x \in [1,k]\})$, then the interviewer the window until the window is at position $a$ such that $a=n-k+1$ or $R(a) = \best(\{R(x) \mid x \in [a,a+k-1]\})$. The probability that $a$ is the best applicant is $\frac{k+a-1}{n}>\frac{1}{2}$, so the interviewer will not skip Applicant $a$. Therefore, the optimal $d^{*} \ngtr 1$. 

Therefore, the optimal $d^{*} = 0 \mbox{ or } 1$.
\end{proof}
\end{comment}
\begin{theorem}  \label{ThmM}
Let $d^*$ be the optimal threshold number of applicants to reject.
\begin{enumerate} [(i)]
\item If $K > \frac{N}{2}$, then $d^{*}=0$.
\item If $K = \frac{N}{2}$, then $d^{*}=0$ or $d^*=1$.
\item For $N>>1$ and $K \geq \frac{N}{2}$, $\Pr(\Win) \approx 2-\frac{K}{N} + \ln \frac{K}{N}$. 
\end{enumerate}
\end{theorem}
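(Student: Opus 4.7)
My plan is to treat parts (i) and (ii) as direct corollaries of Theorem~\ref{Cor1} by bounding both sides of Inequality~\eqref{InEq} at small $i$, and to derive part (iii) by writing $\Pr(\Win)$ as a sum over the index $j$ of the best applicant, obtaining an exact closed form for $\Pr(\Win\mid j)$ via the independence of left-to-right record indicators, and then extracting the asymptotic via standard harmonic-number identities.

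For (i) and (ii), the key input from Theorem~\ref{Cor1} is the formula $\Pr(\Win \mid \text{choose candidate at index } i) = (i+K-1)/N$. I would first bound the rejection side: conditioning on ``applicant $1$ is a candidate,'' a short symmetry computation gives $\Pr(\text{best applicant lies in } [K+1,N] \mid \text{candidate at } 1) = (N-K)/N$, so $\Pr(\Win \mid \text{reject candidate at index } 1) \leq (N-K)/N$ regardless of subsequent play. For $K>N/2$ one has $K/N > (N-K)/N$ strictly, so Inequality~\eqref{InEq} holds at $i=1$ and $d^*=0$. For $K=N/2$ the two bounds agree at $i=1$, but the same computation at $i=2$ gives $(K+1)/N > (N-K-1)/N$ strictly, so $d^* \leq 1$ and hence $d^*\in\{0,1\}$.

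For (iii), assume $d^*=0$ (permissible by (i) and (ii)) and decompose $\Pr(\Win) = \frac{1}{N}\sum_{j=1}^{N}\Pr(\Win\mid j)$. If $j \leq K$ the best applicant sits in every window examined before step $j$, so no earlier index can be a candidate and $\Pr(\Win\mid j)=1$. For $j>K$, losing means some $t\in[1,j-K]$ is a candidate, i.e., $t$ is a left-to-right record \emph{and} no record occurs in $[t+1,t+K-1]$. Using the classical fact that the record indicators $X_t$ are jointly independent with $\Pr(X_t=1)=1/t$, the bad event $B_t$ has probability
\[
\Pr(B_t) = \frac{1}{t}\prod_{s=t+1}^{t+K-1}\!\Bigl(1-\frac{1}{s}\Bigr) = \frac{1}{t+K-1}.
\]
The critical observation is that when $K\geq N/2$, any two indices $t<t'$ in $[1,j-K]$ satisfy $t'-t \leq j-K-1 < K$ (since $j\leq N\leq 2K$), so $B_t$ forbids a record at $t'$ while $B_{t'}$ demands one; the bad events are pairwise disjoint, giving
\[
\Pr(\Win\mid j) = 1 - \sum_{t=1}^{j-K}\frac{1}{t+K-1} = 1 - (H_{j-1} - H_{K-1}).
\]

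It then remains to sum over $j$ and simplify. Applying the identity $\sum_{j=1}^{M}H_j = (M+1)H_M - M$ to the telescoping harmonic sums produces the exact closed form $\Pr(\Win) = 2 - K/N + H_{K-1} - H_{N-1}$; substituting $H_m = \ln m + \gamma + o(1)$ cancels the Euler constants and yields the claimed asymptotic $\Pr(\Win) \approx 2 - K/N + \ln(K/N)$. The main obstacle is establishing the pairwise disjointness of the $B_t$ under the hypothesis $K\geq N/2$; this is exactly what collapses an otherwise unwieldy inclusion--exclusion down to the single sum and makes the closed form accessible.
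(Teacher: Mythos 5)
Your proposal is correct and follows essentially the same route as the paper: parts (i) and (ii) come from comparing $\frac{K}{N}$ against $\frac{N-K}{N}$ at the first candidate and invoking the threshold structure of Theorem~\ref{Cor1}, while part (iii) conditions on the position $j$ of the best applicant and sums the pairwise-disjoint stopping probabilities $\frac{1}{t+K-1}$, which is exactly the paper's Equation~\eqref{Eq11}. The only divergence is in the last step and is a welcome refinement rather than a different method: you make the disjointness of the bad events explicit (the paper only gestures at this via the sliding rule) and evaluate the double sum exactly with harmonic-number identities to get $2-\frac{K}{N}+H_{K-1}-H_{N-1}$ before passing to the limit, whereas the paper approximates both sums by integrals; both give $2-\frac{K}{N}+\ln\frac{K}{N}$.
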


\begin{proof}
(i) and (ii): Applicant 1 may or may not be a candidate. First let Applicant 1 be a candidate. If applicant 1 is chosen, $\Pr(\Win)=\frac{K}{N}$. If applicant 1 is rejected, no candidates are between $2$ and $K$, and so the window slides past $K$, and the remaining $N-K$ applicants are seen. The best applicant among the $N-K$ applicants is the best overall with probability $\frac{N-K}{N}$. Thus for $K>\frac{N}{2}$, applicant 1 should be accepted, and for $K=\frac{N}{2}$, accepting or rejecting applicant 1 provides equal probabilities of winning, $\frac{1}{2}$. Now let applicant 1 not be a candidate. By the sliding rule, the window starts at some index $i>1$ where $i$ is a candidate. The problem reduces to a new problem with $N-i+1$ applicants, window size of $K$, and a candidate at index $1$. Because $K>\frac{N-i+1}{2}$, the first candidate should be selected. Therefore, $d^*=0$ for $K>\frac{N}{2}$, and $d^*$ is either 0 or 1 for $K=\frac{N}{2}$.

(iii): The best applicant at index $j$ is guaranteed to be chosen if $j \in [1,K]$, which occurs with probability $\frac{K}{N}$. If index $j \in [K+1, N]$, then $j$ will be chosen if no candidates are before $j-K+1$. For $m<j-K+1$ the probability that $m$ is a candidate is $\frac{1}{m+K-1}$. By the sliding rule, there cannot be more than one candidate in $[1,K]$. Therefore, we sum $\frac{1}{m+K-1}$ over all $m \in [1,j-K+1]$, and take the complement of the sum to find the probability of not stopping before $j$. Each value of $j$ occurs with probability $\frac{1}{N}$. By the Total Probability Theorem, we add up the probability of winning for all possible values of $j$ and find
\begin{equation} \label{Eq11}
\Pr \left(\Win \mid K\geq \frac{N}{2}\right) = \frac{K}{N} + \frac{1}{N} \sum _{j=K+1}^{N} \left( 1-\sum_{m=1}^{j-K} \frac{1}{m+K-1} \right).
\end{equation}

For large $N$ we approximate the sums in Equation \eqref{Eq11} as integrals. The worst approximation of the inner sum occurs when the integral approximates only one term in the summation: $\frac{1}{K}$. Because the function in the integral has initial value $\frac{1}{K}$, final value $\frac{1}{K+1}$ and is strictly decreasing, the integral approximates the sum with an error on the order of $\frac{1}{K^2}$.  Therefore, because $N$ and $K$ are large, the integral approximation is acceptable, and is similarly acceptable for the outer sum. If we let $x=\frac{K}{N}$, $y=\frac{m}{N}$ and $z=\frac{j}{N}$, 
\begin{equation*}
\Pr(\Win \mid K\geq \frac{N}{2}) \approx x +  \int \limits_{x}^{1} \left( 1-\int\limits_{0}^{z-x} \frac{\mathrm{d}y}{x+y} \right) dz  = 2-\frac{K}{N} + \ln \frac{K}{N}. \qedhere
\end{equation*}
\end{proof}

\subsection{A Recursive Formula for the Probability of Winning}
We now analyze the problem for some window size $K$ and some threshold value $d$ of automatically rejected applicants. We divide the sequence of applicants after $d$ into blocks of $K$ because the sliding rule guarantees that no block of $K$ has more than one candidate. Let $f_{q}(a)$ be the probability of stopping between $(d+(q-1)K+1)$ and $a$, where $q=\lceil\frac{a-d}{K}\rceil$. Because no applicant before $d$ is chosen, $f_{q}(a)=0$ for $q<1$ and $a\leq d$. We present a recursive formula for $f_{q}$.

\begin{lemma}  \label{importantthm}
For $q>0$,
\begin{equation} \label{Eq01} 
f_{q}(a)=\sum_{m=d+(q-1) K + 1} ^{a} \frac{1}{m+K-1}  \left(1 - \sum_{r=-1} ^{q-2} f_{r}(d+r K) - f_{q-1}(m-K) \right).
\end{equation}
\end{lemma}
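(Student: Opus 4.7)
The plan is to decompose $f_q(a)$ as a sum over $m \in [d+(q-1)K+1, a]$ of $\Pr(\mbox{stop at } m)$ and to derive the summand's closed form. Under the optimal strategy, the event ``stop at $m$'' (for $m > d$) is the conjunction of (i) $m$ is a candidate, i.e., $R(m) = \min_{1 \leq j \leq m+K-1} R(j)$, and (ii) no $m' \in [d+1, m-1]$ is a candidate. I would first compute $\Pr(m \mbox{ is a candidate}) = \frac{1}{m+K-1}$ by the symmetry that the minimum of a uniformly random sample of size $m+K-1$ is equally likely to lie at any of those positions.

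Next I would establish the structural observation that condition (i) automatically rules out candidacy for every $m' \in [m-K+1, m-1]$: such an $m'$ has $m$ inside its window $[m', m'+K-1]$, so the candidate condition $R(m') < R(m)$ would contradict (i). Hence, conditional on (i), condition (ii) is equivalent to ``no candidate lies in $[d+1, m-K]$,'' and this event depends only on the relative ranks of $R(1), \ldots, R(m-1)$, since being a candidate at $m' \leq m-K$ involves only $R(1), \ldots, R(m'+K-1)$ with $m'+K-1 \leq m-1$.

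The main subtlety is the independence claim
\[
\Pr(\mbox{no candidate in } [d+1, m-K] \mid m \mbox{ is a candidate}) = \Pr(\mbox{no candidate in } [d+1, m-K]).
\]
The argument is that conditioning on (i) merely fixes the position of the minimum among $R(1), \ldots, R(m+K-1)$, so by symmetry the relative order of the remaining $m+K-2$ values---and in particular that of $R(1), \ldots, R(m-1)$---remains uniform. Since the right-hand event depends only on these relative ranks, the independence follows. I expect this symmetry step, together with the reduction in the previous paragraph, to be the conceptual crux of the argument.

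Finally I would decompose the complementary event: because the ``stop at $m'$'' events are pairwise disjoint and their union over $m' \in [d+1, m-K]$ is exactly ``some candidate lies in $[d+1, m-K]$'' (the earliest such candidate triggers the stop), we obtain
\[
\Pr(\mbox{no candidate in } [d+1, m-K]) = 1 - \sum_{m'=d+1}^{m-K} \Pr(\mbox{stop at } m').
\]
Since $m-K$ lies in block $q-1$, this sum splits into contributions from complete blocks $1, \ldots, q-2$ (each equal to $f_r(d+rK)$) and the partial block $q-1$ (equal to $f_{q-1}(m-K)$). Adopting the convention $f_r \equiv 0$ for $r \leq 0$ lets one write the block sum as $\sum_{r=-1}^{q-2} f_r(d+rK)$, yielding exactly \eqref{Eq01} after multiplying by $\frac{1}{m+K-1}$ and summing over $m$.
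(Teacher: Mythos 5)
Your proof is correct and follows essentially the same route as the paper's: decompose $f_q(a)$ over the stopping index $m$, factor $\Pr(\mbox{stop at } m)$ as $\frac{1}{m+K-1}$ times the probability of no earlier candidate, and split that complementary probability into the complete blocks $f_r(d+rK)$ plus the partial block $f_{q-1}(m-K)$. The only difference is that you explicitly justify two steps the paper leaves implicit --- that candidacy at $m$ precludes any candidate in $[m-K+1,m-1]$, and that the event ``$m$ is a candidate'' is independent of ``no candidate in $[d+1,m-K]$'' by the symmetry of relative ranks --- which tightens rather than changes the argument.
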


\begin{proof} 
By the sliding rule, the window stops sliding when a candidate is at the smallest index of the window. The probability that an applicant at some index $m$ is a candidate is $\frac{1}{m+K-1}$. However, $m$ will not be reached if a candidate is between $(d+1)$ and $(m-K)$, so we subtract the probability that a candidate appears in the previous $q-2$ blocks or between indices $(d+(q-2)K+1)$ and $m-K$. Summing these probabilities for all values of $m$ in $[(d+(q-1)K+1),a]$ yields
\begin{equation*} 
f_{q}(a)=\sum_{m=d+(q-1) K + 1} ^{a} \frac{1}{m+K-1}  \left(1 - \sum_{r=-1} ^{q-2} f_{r}(d+r K) - f_{q-1}(m-K) \right). \qedhere
\end{equation*}
\end{proof}

We now find $\Pr(\Win)$ for a particular $N$, $K$, and threshold index $d$. Let $\sigma_{q}(a)$ be the probability of winning with a candidate in $[1,a]$, where $q= \lceil \frac{a-d}{K}\rceil$. Because no applicants before $d$ are chosen, $\sigma_{q}(a)=0$ for $q<1$ and $a\leq d$. We present a recursive formula for $\sigma_{q}$.

\begin{theorem} \label{impttheorem}
For $q>0$, 
\begin{equation}\label{Eq21}
\sigma_{q}(a)=\sigma_{q-1}(d+(q-1) K)+\frac{1}{N} \sum_{j=d+(q-1) K+1} ^{a} \left(1-\sum_{r=-1}^{q-2}  f_{r}(d+r K) - f_{q-1}(j-K)\right).
\end{equation}
\end{theorem}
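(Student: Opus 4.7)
The plan is to condition on the position $j$ of the best applicant (the unique index with $R(j)=1$) and decompose $\sigma_q(a)$ accordingly. Since each position is equally likely to host the best, $\Pr(R(j)=1)=\frac{1}{N}$ for every $j$, and positions $j\in[1,d]$ contribute nothing because those applicants are automatically rejected. The first step is to split by which block contains $j$:
\[
\sigma_q(a) = \sigma_{q-1}(d+(q-1)K) + \sum_{j=d+(q-1)K+1}^{a} \Pr(\Win\text{ and } R(j)=1),
\]
which immediately recovers the first term of the claimed identity. The remaining work reduces to showing that for each $j$ in block $q$,
\[
\Pr(\Win\text{ and } R(j)=1) = \frac{1}{N}\left(1 - \sum_{r=-1}^{q-2} f_r(d+rK) - f_{q-1}(j-K)\right).
\]

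Conditioning on $R(j)=1$, the interviewer wins exactly when the sliding process has not stopped at any earlier index. I would use the geometric observation that no index $m\in[j-K+1, j-1]$ can be a candidate under this conditioning: $j$ lies inside the window $[m, m+K-1]$ and $R(j)<R(m)$, so $R(m)$ is not the minimum over $[1,m+K-1]$. Hence the only potentially-stopping indices before $j$ lie in $(d, j-K]$; under the block structure these fill blocks $1,\dots,q-2$ entirely together with the prefix of block $q-1$ ending at $j-K$.

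Next, I would verify that conditioning on $R(j)=1$ does not alter the probability of stopping at any index $m\leq j-K$. The event ``stop in $(d, j-K]$'' depends only on the relative ordering of ranks at positions $[1, j-1]$, since the candidacy of any such $m$ depends only on ranks in $[1, m+K-1]\subseteq [1, j-1]$. Given $R(j)=1$, the ranks at the remaining $N-1$ positions form a uniformly random permutation of $\{2,\dots,N\}$, and the induced distribution of relative ranks on $[1, j-1]$ matches the unconditional one. Therefore
\[
\Pr(\text{stop in }(d, j-K]\mid R(j)=1) = \Pr(\text{stop in }(d, j-K]) = \sum_{r=-1}^{q-2} f_r(d+rK) + f_{q-1}(j-K),
\]
by Lemma \ref{importantthm} together with the convention $f_r\equiv 0$ for $r\leq 0$, which absorbs the boundary terms. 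Taking the complement, multiplying by $\frac{1}{N}$, and summing over $j\in[d+(q-1)K+1, a]$ then yields the claimed formula.

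The part I expect to be the main obstacle is making the independence argument fully rigorous: carefully justifying that the candidacy events underlying the definition of the $f_r$'s are preserved in distribution upon conditioning on $R(j)=1$, and ensuring that the $r=-1,0$ boundary terms line up correctly with the block structure --- in particular in the edge case $q=1$, where the inner sum must collapse to $0$ and the formula should reduce to $\sigma_1(a)=\frac{a-d}{N}$.
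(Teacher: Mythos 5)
Your proposal is correct and follows essentially the same route as the paper's proof: split off the contribution of the first $q-1$ blocks as $\sigma_{q-1}(d+(q-1)K)$, condition on the best applicant being at index $j$ in block $q$ (probability $\frac{1}{N}$), and subtract the probability of having stopped earlier, expressed via the $f_r$'s exactly as in Lemma \ref{importantthm}. The only difference is that you explicitly justify two points the paper leaves implicit --- that no index in $(j-K, j)$ can be a candidate when $R(j)=1$, and that conditioning on $R(j)=1$ does not change the distribution of the relative ranks on $[1,j-1]$ governing the stopping events --- which strengthens rather than changes the argument.
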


\begin{proof}
The probability of winning with a candidate between $1$ and $a$ is the sum of the probability of winning with a candidate between indices $1$ and $(d+(q-1)K)$ and the probability of winning with a candidate between indices $(d+(q-1)K+1)$ and $a$. The probability of the former event is $\sigma_{q-1}(d+(q-1)K)$, so we find the probability of the latter event. The probability that the best applicant is at an index $j$ is $\frac{1}{N}$, and the probability of stopping before $j$ is subtracted as in the proof of Lemma \ref{importantthm}. Summing the probability of winning at index $j \in [(d+(q-1)K+1), a]$ yields
\begin{equation*}
\sigma_{q}(a)=\sigma_{q-1}(d+(q-1) K)+\frac{1}{N} \sum_{j=d+(q-1) K+1} ^{a} \left(1-\sum_{r=-1}^{q-2}  f_{r}(d+r K) - f_{q-1}(j-K)\right). \qedhere
\end{equation*}
\end{proof}
If we let $q'=\lceil \frac{N-d}{K} \rceil$ we see that $\Pr(\Win)=\sigma_{q'}(N)$.

Analyzing the Sliding-Window Secretary Problem for large $N$ provides intuition for the optimal strategy for any $N$. In the classical secretary problem, the $\Pr(\Win)$ depends on $\frac{d^*}{N}$ for $N$ large. We now show that $\Pr(\Win)$ depends on $\rho=\frac{d}{N}$ and $w=\frac{K}{N}$ for $N$ large and for $K$ large for the Best-1 case by rewriting the functions $f$ and $\sigma$ for large $N$ as integrals. We constrain $K>>1$,$N>>1$, and $N>K$. 

The function $f$ can be reduced to a new function $F$ for large $N$. We let $w=\frac{K}{N}$, $x=\frac{a}{N}$, $\rho=\frac{d}{N}$ and $z=\frac{m}{N}$. As in the proof of Theorem \ref{ThmM}, because $K$ is large, we define $F_{q}$ with integrals:
\begin{equation} \label{EqF1}
F_{q}(x)=\int\limits_{\rho+(q-1)w}^{x}  \left(\frac{1}{z+w}\right)  \left(1 - \sum_{r=0} ^{q-2} F_{r}(\rho+r w) - F_{q-1}(z-w) \right) dz.
\end{equation}

Similarly, the function $\sigma$ can be reduced to a function $\tau$ for large $N$ with the same normalization, where now $v=\frac{j}{N}$. Then,
\begin{equation} \label{EqT1}
\tau_{q}(x)=\tau_{q-1}(\rho+(q-1) w)+\int\limits_{\rho+(q-1) w} ^{x} \left(1-\sum_{r=0}^{q-2}  F_{r}(\rho+r w) - F_{q-1}(v-w)\right) dv .
\end{equation}

The expressions for $F$ and $\tau$ are functions of solely $w$, $x$ and $\rho$. Thus, $\Pr(\Win)$ is only a function of $w$ and $\rho$, because $\Pr(\Win)=\tau_{q'}(1)$. As a result, the optimal normalized threshold, $\rho^* =\frac{d^*}{N}$, and the optimal probability of winning depend on only $w=\frac{K}{N}$. We now look at the asymptotics for large $K$ and large $N$.
\begin{figure} [h]
\centering
\includegraphics[width=\textwidth]{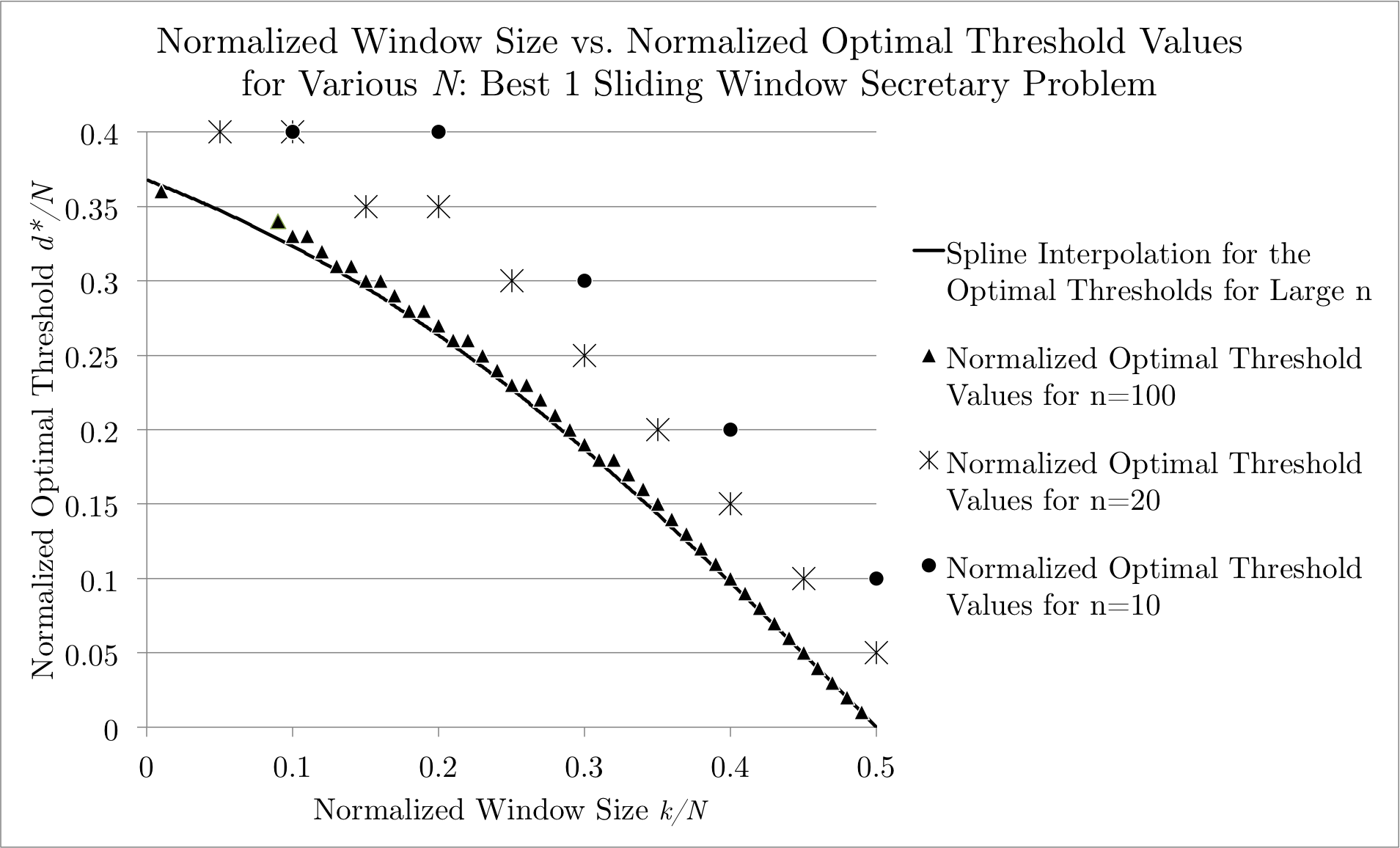}
\caption{How the Normalized Threshold $\frac{d^*}{N}$ varies with Normalized Window Size $\frac{K}{N}$ }
\label{Fig}
\end{figure}

\subsubsection{Asymptotic Optimal Thresholds for Large $N$, Fixed Ratio $\frac{K}{N}$ } \label{lastsection}

We use the definitions of functions $F$ and $\tau$ in Equations \eqref{EqF1} and \eqref{EqT1} respectively to find the optimal $\rho^{*}$ for various $w$. \ref{Ap3} shows some values of $\rho^*$ for a normalized window size $w$ in Table \ref{k3} for large $N$, and Figure \ref{Fig} shows a spline interpolation of the values in Table \ref{k3}, along with values of $\frac{d^{*}}{N}$ for $N=10$, $20$, and $100$. Because the spline interpolation of the values in Table \ref{k3} estimates the optimal thresholds for $N=100$ well, the spline interpolation can predict optimal thresholds for large $N$. In Figure \ref{Fig4}, we show values of the optimal $\Pr(\Win)$ for select values of $\frac{K}{N}$ and $N$ large. The graph predicts the window sizes needed for various probabilities of success.

\begin{figure} [h]
\centering
\includegraphics[width=\textwidth]{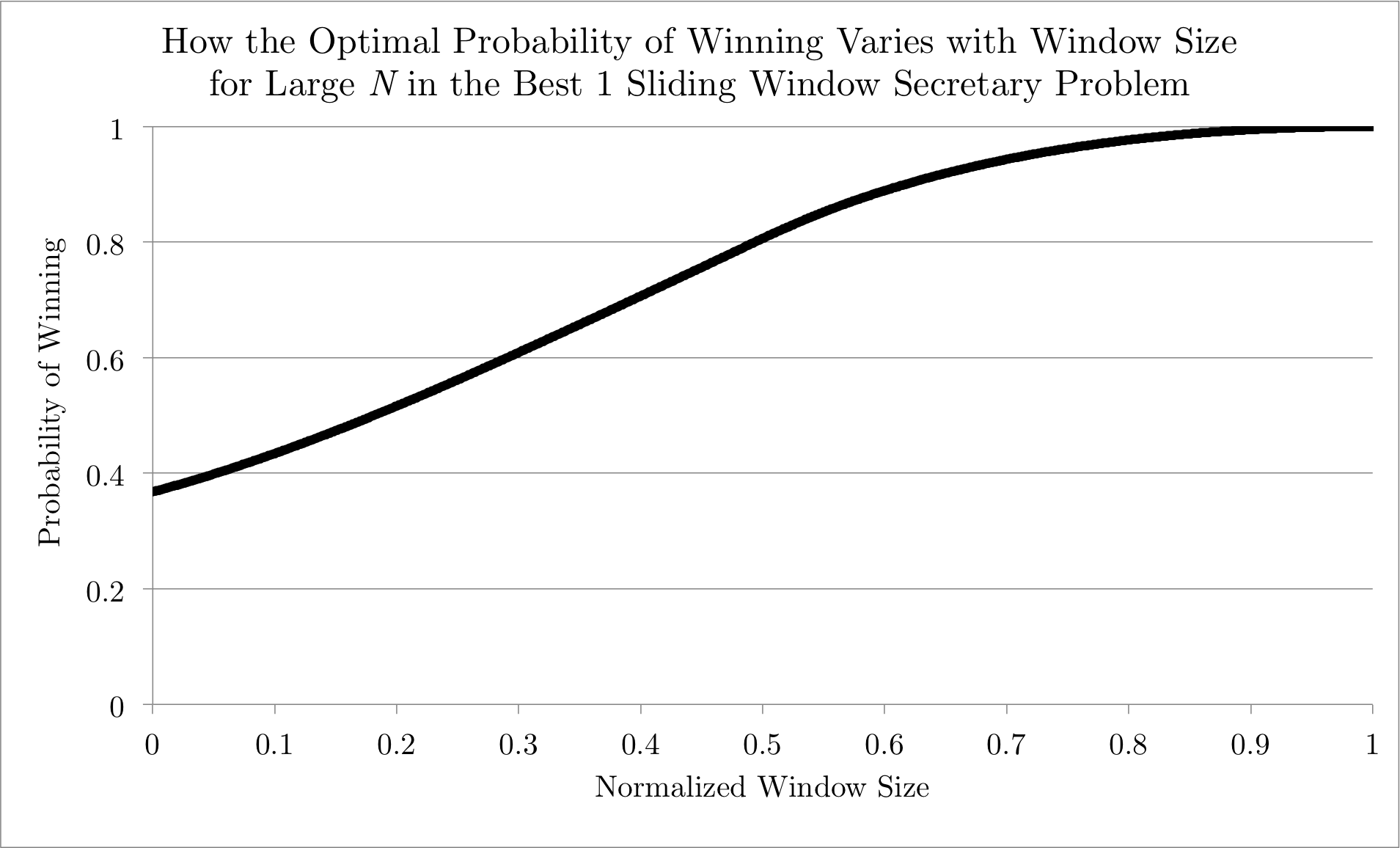}
\caption{The Variation in $\Pr(\Win)$ for Different Values of the Normalized Window Size $w=\frac{K}{N}$ for large $N$ for the Best-1 Sliding-Window Secretary Problem}
\label{Fig4}
\end{figure}
\section{The Sliding-Window Problem: \textit{The Best-2 Case}} \label{Best2}
We now study a Sliding-Window Secretary Problem similar to the Best-1 case, with a payoff of 1 for choosing one of the top two applicants, and 0 otherwise.
A candidate can be the best or second best out of all seen applicants, define a \textit{1-candidate} be a candidate who is the best out of all seen applicants. Define
a \textit{2-candidate} be a candidate who is the second best out of all seen applicants. The interviewer loses nothing if a \textit{sliding rule} is adopted in which the interviewer rejects applicants until the best candidate in the window is at the window's first index. 
We now show that the optimal strategy of the Best-2 case has at most two thresholds.

\begin{theorem} \label{Best2Strategy}
The optimal strategy for the Best-2 case has at most two thresholds, 
$d^*_1$ and $d^*_2$, where the first $d^*_1$ applicants are rejected, 
the first 1-candidate after $d^*_1$ is chosen, and the first 1- or 2-candidate 
after $d^*_2$ is chosen. 
\end{theorem}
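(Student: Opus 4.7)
The plan is to extend the threshold argument of Theorem~\ref{Cor1} to each candidate type in turn and then relate the two resulting thresholds. By the sliding rule, a decision is made at the smallest window index $i$ precisely when the applicant at index $i$ is either a 1-candidate or a 2-candidate, and the optimal action depends on which type appears.

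First I would compute the choosing probabilities directly. Setting $n=i+K-1$ and counting permutations consistent with the required order-statistic condition at position $i$ gives
\[
\Pr(\Win \mid \mbox{Choosing 1-candidate at } i) = \frac{n(2N-n-1)}{N(N-1)}, \qquad \Pr(\Win \mid \mbox{Choosing 2-candidate at } i) = \frac{n(n-1)}{N(N-1)}.
\]
Both are strictly increasing in $i$, and the first strictly exceeds the second for every $n<N$. In parallel, the probabilities $\Pr(\Win \mid \mbox{Rejecting 1-cand at } i)$ and $\Pr(\Win \mid \mbox{Rejecting 2-cand at } i)$, each computed under the subsequent optimal policy, are weakly decreasing in $i$, since rejection can only still win if one of the top two applicants lies strictly after index $i+K-1$. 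The same intersection-of-monotone-curves argument used in Theorem~\ref{Cor1}, applied separately to each candidate type, then yields thresholds $d_1^*$ and $d_2^*$ such that a 1-candidate is accepted iff its index exceeds $d_1^*$ and a 2-candidate is accepted iff its index exceeds $d_2^*$ (allowing either to be vacuous).

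The remaining step is to show $d_1^* \leq d_2^*$, which is precisely what places the 1-candidate-only phase before the combined 1-or-2-candidate phase. My plan is to establish
\[
\Pr(\Win \mid \mbox{Rejecting 1-candidate at } i) \;\leq\; \Pr(\Win \mid \mbox{Rejecting 2-candidate at } i)
\]
by a coupling and stochastic-dominance argument: conditioning on ``applicant $i$ is the best of the first $n$'' produces a stochastically smaller (better) rank for applicant $i$ than conditioning on ``applicant $i$ is the second best,'' so fewer of the top two applicants remain in positions $i+K,\dots,N$ and all subsequent optimal continuations are stochastically worse. Combined with the strict dominance $\Pr(\Win \mid \mbox{Choosing 1-cand at } i) > \Pr(\Win \mid \mbox{Choosing 2-cand at } i)$, any $i$ exceeding $d_2^*$ also satisfies Inequality~\eqref{InEq} for a 1-candidate, forcing $d_1^* \leq d_2^*$ and yielding the stated strategy.

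The main obstacle is this last comparison: both ``Reject'' probabilities are defined via the optimal continuation, so making the dominance rigorous will likely require either a backward induction on the remaining horizon (carrying the inequality $V^r_1(i)\leq V^r_2(i)$ through each step of the recursive formula analogous to Lemma~\ref{importantthm}) or a direct exchange argument on permutations that tracks how the 1-candidate versus 2-candidate conditioning propagates forward under the sliding rule.
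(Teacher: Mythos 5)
Your setup matches the paper's proof almost exactly: the two choosing probabilities you write are the same ones the paper obtains by inclusion--exclusion (with $n=i+K-1$, the paper's $\frac{n}{N}+\frac{n}{N}-\frac{n}{N}\cdot\frac{n-1}{N-1}$ is your $\frac{n(2N-n-1)}{N(N-1)}$), and the per-type intersection-of-monotone-curves argument is the paper's argument verbatim. Where you diverge is the last step. The paper silently uses a \emph{single} rejection curve for both candidate types, so $d_1^*\leq d_2^*$ falls out of $\Pr(\Win\mid\mbox{choose 1-cand})\geq\Pr(\Win\mid\mbox{choose 2-cand})$ alone; you correctly notice that the two rejection values need not coincide and try to repair this with the inequality $\Pr(\Win\mid\mbox{reject 1-cand at }i)\leq\Pr(\Win\mid\mbox{reject 2-cand at }i)$.

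That inequality is the genuine gap, and I believe it points the wrong way. Condition on the relative order of the first $n$ applicants only: in both cases the distribution of which overall ranks sit in positions $n+1,\dots,N$ is identical, and the only applicants among the first $n$ who can ever be top-two overall are the best and second best of those $n$. After rejecting a 2-candidate, both of these are irrevocably lost (the best-so-far already left the window, the second-best-so-far is the one you just rejected). After rejecting a 1-candidate, the second-best of the first $n$ lands in positions $i+1,\dots,n$ with probability $\frac{K-1}{n-1}$ and thus remains in the window, where it can still be taken later as a 2-candidate; ignoring it reproduces the 2-candidate continuation exactly. So the reject-a-1-candidate continuation weakly \emph{dominates}, i.e.\ $\Pr(\Win\mid\mbox{reject 1-cand at }i)\geq\Pr(\Win\mid\mbox{reject 2-cand at }i)$ (with equality at $K=1$), and your chain $\mbox{Choose}_1>\mbox{Choose}_2>\mbox{Reject}_2\geq\mbox{Reject}_1$ breaks at the last link. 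To close the argument you would instead need to compare differences, e.g.\ show that $\mbox{Choose}_1(i)-\mbox{Choose}_2(i)=\frac{2n(N-n)}{N(N-1)}$ exceeds $\mbox{Reject}_1(i)-\mbox{Reject}_2(i)$, which is bounded by the probability of the extra winning scenario above; that is provable but is real work that neither your sketch nor the paper's proof actually carries out.
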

\begin{proof}
The probability of winning at an index $i$ given that a 1-candidate is at index $i$ is the probability that the 1-candidate is the best or second best overall. By inclusion-exclusion, 
$$\Pr(\Win \mbox{with 1-Candidate at index} i) = \left(\frac{i+K-1}{N}\right)+ \left(\frac{i+K-1}{N}\right) -\left(\frac{i+K-1}{N}\right)\left(\frac{i+K-2}{N-1} \right).$$

In order for a 2-candidate to be the second best, the best must have been passed,
$$\Pr(\Win \mbox{with 2-candidate at index} i)= \left(\frac{i+K-1}{N}\right) \left( \frac{i+K-2}{N-1} \right).$$

Similar to the proof of Theorem \ref{Cor1}, $\Pr(\Win \mbox{and Reject Candidate } i)$  decreases in $i$. Because applicant $N$ provides a win with probability $\frac{2}{N}$, if applicant $N-K$ is a candidate, the probability of rejecting applicant $N-K$ and winning is $\frac{2}{N}$. A sketch of $\Pr(\Win \mid \mbox{with 1-Candidate } i)$, $\Pr(\Win \mbox{with 2-candidate } i)$, and  $\Pr(\Win \mid \mbox{Rejecting Candidate } i)$ is shown in Figure \ref{T31} to provide intuition for the remaining part of the proof. Therefore, as in Theorem \ref{Cor1}, for both types of candidates we define two integers $d_1^*+1$ and $d_2^*+1$ to be the smallest indices at which the interviewer should not reject a 1-candidate or 2-candidate respectively. Because $\Pr(\Win \mbox{with 1-Candidate } i) \geq \Pr(\Win \mbox{with 2-candidate } i)$, $d^*_1\leq d^*_2$.
\end{proof}

\begin{figure} [h]
\centering
\includegraphics[width=\textwidth]{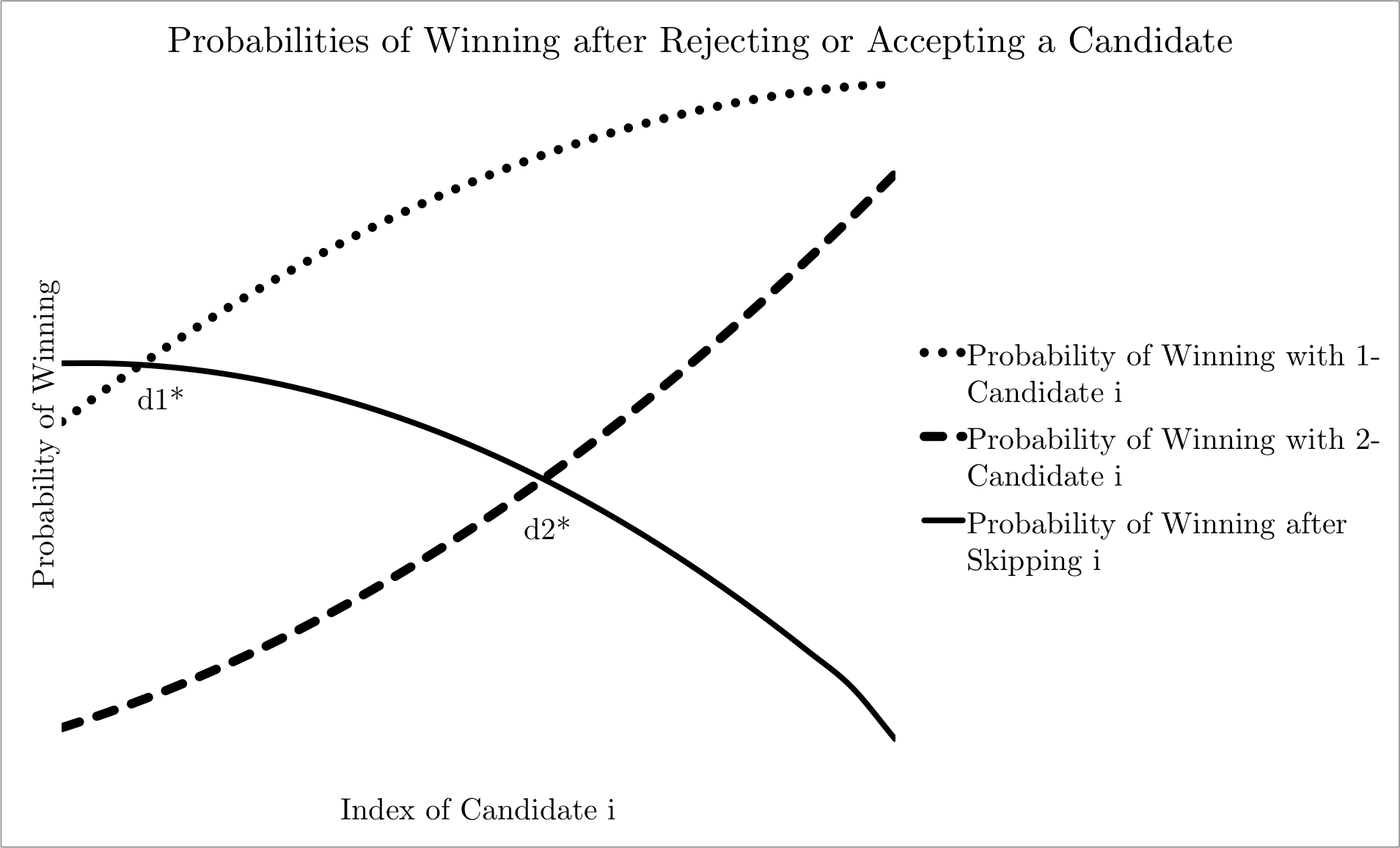}
\caption{A Pictorial Representation of Theorem \ref{Best2Strategy}. The probability of winning with candidate $i$ strictly increases in $i$ for both types of candidates  while the probability of winning with candidate $i$ rejected decreases in $i$. The intersections of the curves show the threshold values.}
\label{T31}
\end{figure}
By the definitions of $d_1^*$ and $d_2^*$ in Theorem \ref{Best2Strategy}, the \textit{optimal strategy for the Best-2 case} is to reject the first $d^*_1$  candidates, to choose the first 1-candidate after $d^*_1$ with the sliding rule, and to choose the first 1- or 2-candidate after $d^*_2$ with the sliding rule.

There are four subcases for the indices of the best and second best applicants, 
$j_1$ and $j_2$, respectively:
\begin{enumerate} [(i)]
\item $j_1\leq d_1$ and $j_2>d_2$
\item $j_2 \leq d_1$ and $j_1>d_1$
\item $j_1>d_1$ and $j_2>d_1$
\item $j_1\leq d_1$ and $j_2\leq d_2$.
\end{enumerate}
Location (iv) guarantees a loss, so we only need to consider (i), (ii), and (iii).

\subsection{Special Cases for $K$}
We first showe $d^*_1 = d^*_2 = 1$ for a window size of $K=N-2$.

For window size $K=N-2$, we find the optimal $d_1^*$ and $d_2^*$ for each of the 
following 4 cases: (i) $d_1=0$ (because if no 1-candidates are skipped we need not consider 2-candidates); (ii) $d_1=1$ and $d_2=1$; (iii) $d_1=1$ and $d_2=2$; and (iV) $d_1=2$ and $d_2=2$. 

For $d_1=0$, the interviewer loses if and only if there is a 1-candidate in the first index, but the second best and best applicants are in the last two indices, i.e., $R(1)=3$, $R(N-1)$ is either 2 or 1, and $R(N)$ is either 1 or 2. This event occurs with probability $\frac{2}{N(N-1)(N-2)}$. For $d_1=1$ and $d_2=1$, the interviewer loses if and only if applicant 1 is either the best or second best, and applicant 2 is a candidate that does not provide a win, i.e., $R(1)$ is either 1 or 2, $R(2)=3$, and $R(N)$ is either 2 or 1. This event occurs with probability $\frac{2}{N(N-1)(N-2)}$. For $d_1=1$, and $d_2=2$, the interviewer loses if and only if the interviewer skips the first and second place applicants, i.e., $R(1)=1$ and $R(2)=2$. This event occurs with probability $\frac{1}{N(N-1)}$. Finally for $d_1=2$ and $d_2=2$, the interviewer loses if and only if the interviewer skips the first and second place applicants, i.e., $R(1)$ is either 1 or 2, and $R(2)$ is either 2 or 1. This event occurs with probability $\frac{2}{N(N-1)}$. Thus, the probability of winning is maximized for $d_1^*=0$ or $d_1^*=1$ and $d_2^*=1$ for $n>4$. 

The results of a simulation for small values of $N$ and $K$ in \ref{Ap4} suggest that as $K$ increases, $\Pr(\Win)$ strictly increases and both optimal thresholds decrease. We formally prove this below. We first prove that $\Pr(\Win)$ strictly increases as $K$ increases.

\begin{lemma} \label{Lemma4}
Let $d^*_{K 1}$ and $d^*_{K 2}$ denote the optimal first and second thresholds respectively for a sliding window of size $K$, and let similar notation hold for $\kappa$. If $\kappa<K$, then $\Pr(\Win \mid \kappa, d^*_{\kappa 1}, d^*_{\kappa 2})<P(\Win \mid K, d^*_{K 1}, d^*_{K 2})$. 
\end{lemma}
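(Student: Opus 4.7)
The plan is to mirror the structure of Lemma 2 by sandwiching
\[
\Pr(\Win \mid \kappa, d^*_{\kappa 1}, d^*_{\kappa 2}) < \Pr(\Win \mid K, d^*_{\kappa 1}, d^*_{\kappa 2}) \leq \Pr(\Win \mid K, d^*_{K 1}, d^*_{K 2}),
\]
where the right inequality is immediate from the optimality of $(d^*_{K 1}, d^*_{K 2})$ for window size $K$. The substantive work is the left strict inequality, which I would establish by fixing the common thresholds $(d^*_{\kappa 1}, d^*_{\kappa 2})$ and comparing the two strategies pointwise on each rank permutation $\pi$, showing (a) every permutation on which window-$\kappa$ wins is also a win for window-$K$, and (b) at least one permutation yields a win for window-$K$ but not for window-$\kappa$.

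The central structural fact is a monotonicity-in-window-size of the acceptance condition. At front position $f$, window-$\kappa$ has seen $1,\ldots,f+\kappa-1$ while window-$K$ has seen the superset $1,\ldots,f+K-1$. For 1-candidates, ``applicant at $f$ is best of $1,\ldots,f+K-1$'' directly implies ``applicant at $f$ is best of $1,\ldots,f+\kappa-1$.'' For 2-candidates in phase 3, a short argument shows the same: if the applicant at $f$ has rank $k$ and is window-$K$'s second-best of $1,\ldots,f+K-1$, then every rank strictly between the minimum present rank and $k$ lies at a position beyond $f+K-1$, and hence beyond $f+\kappa-1$, so in $1,\ldots,f+\kappa-1$ the same applicant is either still the second-best or has become the best---and in either case is a candidate that triggers window-$\kappa$'s phase-3 acceptance. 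Thus every front position at which window-$K$ accepts is also one at which window-$\kappa$ accepts, forcing $f_\kappa \leq f_K$ for the two acceptance times.

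For part (a), I would then verify that the accepted rank under window-$K$ is still in the top two whenever window-$\kappa$'s is. If window-$\kappa$'s acceptance at $f_\kappa$ is a 2-candidate (necessarily of rank $2$), a parallel set-of-ranks analysis shows window-$K$'s 2-candidate condition at $f_\kappa$ is also satisfied, giving $f_K = f_\kappa$; the two strategies coincide. Otherwise $f_\kappa$ is a 1-candidate: rank $1$ gives $f_K = f_\kappa$ trivially, and rank $2$ forces the overall best applicant $y$ to lie in $[f_\kappa+\kappa,f_\kappa+K-1]$ (the only possible spoiler of window-$K$'s 1-candidate condition at $f_\kappa$), after which a sliding-rule check verifies that window-$K$ accepts $y$ at its own front position with no intervening spurious acceptance---still a top-two selection. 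For part (b), I would exhibit a permutation placing the overall best at some position $j^* \in (d^*_{\kappa 1}+\kappa,\, d^*_{\kappa 1}+K]$ and arranging positions $1,\ldots,d^*_{\kappa 1}+\kappa$ so that position $d^*_{\kappa 1}+1$ holds the smallest rank of that prefix but has rank at least $3$; window-$\kappa$ then prematurely accepts this rank-$(\geq 3)$ applicant and loses, whereas window-$K$'s larger view exposes $j^*$ and the sliding rule delivers the overall best.

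The main obstacle is the 2-candidate leg of the monotonicity claim, since ``second-best'' does not obviously transfer from a larger seen set to a subset. The trick is the observation above---combined with the fact that window-$\kappa$'s phase-3 acceptance rule covers both 1- and 2-candidates, so the same applicant at the same position is caught by one or the other. This closes off the sole pathological scenario that could break the comparison (window-$K$ prematurely accepting a non-top-two applicant via its 2-candidate condition while window-$\kappa$ continues sliding toward a top-two win), and the explicit construction in (b) sharpens the resulting weak inequality to the strict one stated.
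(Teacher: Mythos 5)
Your opening sandwich $\Pr(\Win \mid \kappa, d^*_{\kappa 1}, d^*_{\kappa 2}) < \Pr(\Win \mid K, d^*_{\kappa 1}, d^*_{\kappa 2}) \leq \Pr(\Win \mid K, d^*_{K 1}, d^*_{K 2})$ is exactly the paper's, but you establish the left-hand strict inequality by a genuinely different route. The paper conditions on the three subcases for where $j_1$ and $j_2$ fall relative to the thresholds, applies Lemma \ref{Lemma2} (the Best-1 monotonicity result) inside each subcase, and then notes that the subcases occur with the same probabilities for both window sizes, so the conditional inequalities aggregate (``Simpson's paradox does not apply''). You instead run an unconditional permutation-by-permutation coupling: every winning sequence for window $\kappa$ remains a win for window $K$ under the same thresholds, plus one explicit witness of strictness. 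The heart of your argument --- that an applicant who is a 1- or 2-candidate relative to the larger seen set $\{1,\dots,i+K-1\}$ is still a 1- or 2-candidate relative to the subset $\{1,\dots,i+\kappa-1\}$, so the $K$-window never stops strictly earlier --- addresses head-on the interaction between the two candidate types that the paper's phrase ``Lemma \ref{Lemma2} exactly applies'' glosses over (Lemma \ref{Lemma2} concerns the Best-1 stopping process, while in subcase (iii) the Best-2 process may also stop at 2-candidates after $d^*_{\kappa 2}$). Your route costs more case analysis, especially in verifying via the sliding rule that the $K$-window's eventual selection is still top-two when the overall best lands in $[f_\kappa+\kappa,\, f_\kappa+K-1]$, but it buys a self-contained argument that does not lean on an informal transfer of Lemma \ref{Lemma2} or on the unexamined aggregation step; the paper's route buys brevity by reusing Lemma \ref{Lemma2} as a black box.
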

\begin{proof}
We prove that $\Pr(\Win \mid \kappa, d^*_{\kappa 1}, d^*_{\kappa 2}) < \Pr(\Win \mid K, d^*_{\kappa 1}, d^*_{\kappa 2}) \leq \Pr(\Win \mid K, d^*_{K 1}, d^*_{K 2})$. Because $d^*_{K 1}$, and $d^*_{K 2}$ are optimal for $K$, $\Pr(\Win \mid K, d^*_{\kappa 1}, d^*_{\kappa 2}) \leq \Pr(\Win \mid K, d^*_{K 1}, d^*_{K 2})$. Therefore, we prove that $\Pr(\Win \mid \kappa, d^*_{\kappa 1}, d^*_{\kappa 2}) < \Pr(\Win \mid K, d^*_{\kappa 1}, d^*_{\kappa 2})$.

Let us first consider winning with a 2-candidate after $d^*_{\kappa 2}$ given that $j_1<d^*_{\kappa 1}$. Since $K>\kappa$, it follows from Lemma \ref{Lemma2} that we win more frequently with a window of $K$ than with a window of $\kappa$. Similarly, let us consider winning with a 1-candidate, or with threshold $d^*_{\kappa 1}$. Then Lemma \ref{Lemma2} exactly applies, either if $j_2<j_1$ or $j_2>j_1$ since both will be considered as 1-candidates. Because each subcase occurs with the same probability for identical thresholds, Simpson's paradox does not apply and therefore $\Pr(\Win \mid \kappa, d^*_{\kappa 1}, d^*_{\kappa 2}) < \Pr(\Win \mid K, d^*_{\kappa 1}, d^*_{\kappa 2})$.
\end{proof}

We now prove that both optimal thresholds decrease as the window size increases.

\begin{lemma} \label{Lemma5}
If $\kappa<K$, then $d^*_{K 1} \leq d^*_{\kappa 1}$ and $d^*_{K 2} \leq d^*_{\kappa 2}$.
\end{lemma}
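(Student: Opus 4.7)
My plan is to mimic the structure of the proof of Lemma \ref{Lemma3}, carrying out the argument once for each of the two thresholds characterized in the proof of Theorem \ref{Best2Strategy}. From that proof, $d^*_{K1}+1$ is the smallest index $i$ at which
\[
\Pr(\Win \mid \mbox{Choose 1-cand at } i, K) \geq \Pr(\Win \mid \mbox{Reject 1-cand at } i, K),
\]
and $d^*_{K2}+1$ is the smallest such $i$ with ``2-cand'' in place of ``1-cand,'' and the identical characterization holds with $K$ replaced by $\kappa$. It therefore suffices to show that increasing the window size from $\kappa$ to $K$ strictly raises the choose-side and weakly lowers the reject-side of each of these two inequalities, because then the crossover index can only shift to the left.

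The choose-sides are handled by the closed-form expressions from the proof of Theorem \ref{Best2Strategy},
\[
\Pr(\Win \mid \mbox{Choose 1-cand at } i, K) = \frac{2(i+K-1)}{N} - \frac{(i+K-1)(i+K-2)}{N(N-1)},
\]
\[
\Pr(\Win \mid \mbox{Choose 2-cand at } i, K) = \frac{(i+K-1)(i+K-2)}{N(N-1)},
\]
both of which are nondecreasing in $K$ on the valid range $i+K-1 \leq N$ (a direct calculation, since both are functions of $t = i+K-1$ that are nondecreasing for $t \leq N$) and hence strictly increase when $K$ replaces $\kappa$. For the reject-sides, I would invoke the same argument used in Lemma \ref{Lemma3}: after rejecting at index $i$, winning requires the best applicant to lie at some position $j \geq i+K$ under window $K$ (or $j \geq i+\kappa$ under window $\kappa$), and the event with the larger window has the smaller probability. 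Combining these monotonicities, the threshold inequality that first holds at $i = d^*_{\kappa\ell}+1$ under window $\kappa$ also holds there under window $K$, yielding $d^*_{K\ell} \leq d^*_{\kappa\ell}$ for each $\ell \in \{1,2\}$.

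The main obstacle is making the reject-side monotonicity fully rigorous in the Best-2 setting. Compared with Lemma \ref{Lemma3}, the reject-and-later-win event is now richer: its probability depends on the joint locations $(j_1, j_2)$ of the best and second-best applicants, and the downstream strategy changes type at the $d^*_{\kappa 2}$ boundary from accepting only 1-candidates to accepting any 1- or 2-candidate. I would address this by casework on the three relevant configurations (i)--(iii) of $(j_1, j_2)$ listed just before this subsection, arguing in each case that enlarging the window from $\kappa$ to $K$ shrinks the positional region in which the best applicant (and, where relevant, the second-best) can still be caught after rejecting at $i$, so the corresponding contribution to $\Pr(\Win \mid \mbox{Reject cand at } i)$ is non-increasing in $K$. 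Once this reject-side comparison is verified for both candidate types, Lemma \ref{Lemma5} follows exactly as Lemma \ref{Lemma3} follows from Theorem \ref{Cor1}.
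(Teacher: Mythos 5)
Your proposal takes essentially the same route as the paper, whose entire proof is to observe that the $d_2$ threshold only matters in subcase (i) ($j_1 \leq d_1$ and $j_2 > d_2$) and then to rerun the monotonicity argument of Lemma \ref{Lemma3} separately for each threshold. Your write-up is, if anything, more explicit than the paper's about verifying that the choose-side probabilities are nondecreasing in $K$ and about the casework required to make the reject-side comparison rigorous.
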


\begin{proof}
The $d_2$ threshold is only relevant for the case in which $j_1\leq d_1$ and $j_2>d_2$. We can use the same argument as in Lemma \ref{Lemma3} to conclude that $d^*_{K 2} \leq d^*_{\kappa 2}$. Similarly we can use the same argument as in Lemma \ref{Lemma3} for the $d_1$ threshold to conclude that $d^*_{K 1} \leq d^*_{\kappa 1}$.
\end{proof}

We now find the maximum value of $K$ for which $d_2^*$ is no longer 1.
\begin{theorem}
For large $N$, $K=\frac{N-1}{\sqrt 2}$ is the largest window size for which $d^*_2>1$.
\end{theorem}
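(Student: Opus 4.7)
The plan is to characterise $d_2^*$ via the indifference condition extracted in the proof of Theorem~\ref{Best2Strategy} and then to identify the largest $K$ at which that indifference still falls strictly past window-index~$2$. By that theorem, $d_2^*+1$ is the smallest window-index $i$ at which
\begin{equation*}
\Pr(\Win \mid \text{Choose 2-candidate at } i)\;\geq\;\Pr(\Win \mid \text{Reject 2-candidate at } i),
\end{equation*}
so $d_2^*>1$ fails exactly when this inequality already holds (weakly) at $i=2$. Consequently the largest $K$ with $d_2^*>1$ is determined by the equality
\begin{equation*}
\Pr(\Win \mid \text{Choose 2-candidate at } i=2)\;=\;\Pr(\Win \mid \text{Reject 2-candidate at } i=2),
\end{equation*}
and the task reduces to computing both sides, equating them, and solving for $K$.

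The left-hand side is immediate from the formula in the proof of Theorem~\ref{Best2Strategy}, namely $(K+1)K/[N(N-1)]$, which under the asymptotic scaling $w=K/N$ fixed, $N\to\infty$, tends to $w^2$. For the right-hand side I would condition on the positions $j_1,j_2$ of the overall best and second-best applicants. A 2-candidate at window-index~$2$ forces applicant~$1$ to be the best and applicant~$2$ the second-best of the first $K+1$ interviewees, and by the sliding rule rejecting this 2-candidate discards both of them. A subsequent win must therefore come from selecting a true top-$2$ applicant at some position in $[K+2,N]$. In this regime of large $K$, Lemmas~\ref{Lemma4} and~\ref{Lemma5} together with the earlier $K=N-2$ computation force $d_1^*=0$, so the continuation is effectively a one-threshold game whose winning probability can be written as a sum over $j\in[K+2,N]$ of $1/N$ weighted by the probability that no earlier 1- or 2-candidate triggers a stop, in direct analogy with Equation~\eqref{Eq11}.

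Passing to integrals as in Section~\ref{lastsection} reduces the reject side to a closed-form expression $g(w)$, and the balance equation $w^2=g(w)$ has positive root $w=1/\sqrt2$, corresponding to $K=(N-1)/\sqrt2$ at leading order in $N$. The main obstacle will be computing $g(w)$ cleanly: the post-rejection continuation is not a fresh Best-2 game because two top-ranked applicants have already been eliminated, so one must carefully exclude branches where either rejected applicant would have been the winning pick and carry the correct post-rejection thresholds ($d_1=0$ together with $d_2=2$ as the next increment under consideration) through to the end. Once $g(w)$ is in hand, the algebraic reduction of $w^2=g(w)$ to $w=1/\sqrt2$ is the substance of the theorem.
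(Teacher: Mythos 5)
Your setup is right: the indifference condition at window-index $2$, the choose-side probability $\frac{(K+1)K}{N(N-1)}\to w^2$, and the reduction to solving an equality in $w$ all match the paper. But the entire substance of the theorem is the reject-side probability, and that is exactly the piece you leave as ``the main obstacle.'' Your proposed route --- a stopping-time sum over $j\in[K+2,N]$ weighted by the probability of no earlier stop, in analogy with Equation~\eqref{Eq11}, then passed to integrals --- is both unnecessary and, as sketched, would not close cleanly, because you would have to track which of the two discarded applicants blocks which continuation branch. The root $w=1/\sqrt2$ is asserted rather than derived.

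The missing idea is the paper's preliminary restriction to $K>\frac{N}{2}$ (justified by noting that a 2-candidate in the first half is the true second best with probability only about $\frac14$, so $d_2^*>1$ cannot persist for smaller windows). Once $K>\frac{N}{2}$, the situation after rejecting the 2-candidate at index $2$ collapses: applicants $1$ and $2$ are the top two of the first $K+1$, so no applicant in $[3,K+1]$ can ever be a candidate, and by the time the window front reaches any position $p\geq K+2$ the interviewer has seen all $N$ applicants. Hence the interviewer wins after rejection if and only if the best or second best overall lies in $[K+2,N]$, and the reject side is simply $\frac{N-K-1}{N}+\frac{K}{N}\cdot\frac{N-K-1}{N-1}\to 1-w^2$, with no recursion at all. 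The balance $w^2=1-w^2$ then gives $w=\frac{1}{\sqrt2}$, i.e.\ $K=\frac{\sqrt{8(N-1)^2+1}+1}{4}\approx\frac{N-1}{\sqrt2}$. Without this observation (or an explicit computation of your $g(w)$ showing $g(w)=1-w^2$), the proof is incomplete; also note that your phrase ``the probability that no earlier 1- or 2-candidate triggers a stop'' overlooks that this probability is trivially $1$ over $[3,K+1]$, which is precisely what makes the paper's two-line argument work.
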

\begin{proof}
We only consider 2-candidates after $d^*_2$. Because for large $N$, the probability that a 2-candidate in the first $\frac{N}{2}$ applicants is the 2nd best applicant overall is $\frac{1}{4}$, but the probability that there is a better applicant later is $\frac{3}{4}$, we consider only $K>\frac{N}{2}$. Thus the interviewer sees all applicants if he skips a 2-candidate at index $2$.

If $d^*_2$ is 1, $\Pr(\Win)$ is higher if a 2-candidate at index $2$ is chosen than if the 2-candidate is rejected. The probability that the 2-candidate at index $2$ is the second best overall is the probability that the first and second best applicants overall are between $1$ and $K+1$. The probability of winning if the 2nd applicant is rejected is equal to the probability that the best or second best applicants are after $K+1$. Thus, for $d^*_2=1$,
$$\left(\frac{K+1}{N} \right) \left(\frac{K}{N-1}\right) >\left(\frac{N-K-1}{N}\right) +  \left(\frac{K}{N}\right) \left( \frac{N-K-1}{N-1}\right).$$

The largest value at which the inequality does not hold is
\begin{equation*}
K=\frac{\sqrt{8(N-1)^2+1}+1}{4} \approx \frac{N-1}{\sqrt{2}}. \qedhere
\end{equation*}
\end{proof}

\subsection{A Recursive Formula for the Probability of Winning}
As in the Best-1 case, we now derive a general solution for the Best-2 case, assuming possibly non-optimal thresholds of $d_1$ and $d_2$. We again consider blocks of size $K$ after $d_1$ and after $d_2$. Let $h_{s}(a)$ be the probability of stopping at a 2-candidate between $(d_2+(s-1)K+1)$ and $a$ given that the best applicant that the interviewer has interviewed is between $1$ and $d_1$, where $s=\lceil\frac{a-d_2}{K}\rceil$. Let $g_{s}(a)$ be the probability of stopping at a 2-candidate between $d_2+(s-1)K+1$ and $a$ where $s=\lceil\frac{a-d_2}{K}\rceil$. Finally, let $f_{q}(a)$ be the probability that the interviewer stops at a 1-candidate between $d_1 +(q-1)K+1$ and $a$, where $q=\lceil\frac{a-d_1}{K}\rceil$. For $q\leq 0$, $f$ is 0, and for $s\leq 0$, $g$ and $h$ are both 0, because the interviewer does not select 2-candidates before $d_2$ and does not select 1-candidates before $d_1$. We now present recursive formulas for $f_{q}$, $g_{s}$, and $h_{s}$, for $q>0$ and for $s>0$. For convenience, let 
$$c(i)=\sum_{r=-1}^{\lceil\frac{i-d_2}{K}\rceil-2}(h_r(d_2+rK))+ h_{\lceil\frac{i-d_2}{K}\rceil-1}(i-K),$$
and
$$t(i)=\sum_{r=-1}^{\lceil\frac{i-d_1}{K}\rceil-2}(f_r(d_1+rK)) + f_{\lceil\frac{i-d_1}{K}\rceil-1}(i-K) +\sum_{r=-1}^{\lceil\frac{i-d_2}{K}\rceil-2}(g_r(d_2+rK)) + g_{\lceil\frac{i-d_2}{K}\rceil-1}(i-K).$$

\begin{lemma} \label{importantt}
For $s>0$ and for $q>0$, 
$$h_s(a)=\sum_{i=d_2+(s-1)K+1}^{a} \left(\frac{1}{i+K-2}\right) \left(1-c(i) \right),$$
$$g_s(a)=\sum_{i=d_2+(s-1)K+1}^{a} \left(\frac{d_1}{i+K-1}\right) \left( \frac{1}{i+K-2}\right) \left(1-c(i)\right),$$
$$f_q(a)=\sum_{i=d_1+(q-1)K+1}^{a} \left( \frac{1}{i+K-1}\right) \left(1-t(i) \right).$$
\end{lemma}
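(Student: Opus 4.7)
The plan is to prove each of the three identities by the same accounting used in Lemma~\ref{importantthm}: express the probability of stopping at index $i$ as the probability that $R(i)$ is the relevant kind of candidate times the probability of not having stopped earlier, then sum over the appropriate range.

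For $f_q(a)$, by symmetry over the $i+K-1$ positions, the probability that $R(i)$ is the smallest of $R(1),\dots,R(i+K-1)$ equals $\tfrac{1}{i+K-1}$, which is precisely the probability that $i$ is a 1-candidate at the moment the window aligns with it. Relative to the Best-1 case, the subtraction $t(i)$ must now bundle both prior 1-candidate stops (the $f_r$ terms) and prior 2-candidate stops (the $g_r$ terms, which are the unconditional joint probabilities of a 2-candidate stop together with the best-of-seen lying in $[1,d_1]$). As in Lemma~\ref{importantthm}, I would argue that conditional on $R(i)$ being the minimum of $R(1),\dots,R(i+K-1)$, no index $m'\in[i-K+1,i-1]$ can have been a stoppable candidate: a 1-candidate stop at $m'$ would force $R(m')<R(i)$, contradicting minimality, while a stoppable 2-candidate at $m'$ would require the best of $R(1),\dots,R(m'+K-1)$ to sit in $[1,m'-1]$, again contradicted by $R(i)$ being a smaller member of that set. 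This truncates the subtraction to $m'\le i-K$, and symmetry of the uniform permutation then makes the conditional and unconditional prior-stop probabilities agree on this truncated range --- exactly what $t(i)$ encodes.

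For $g_s(a)$, I would factor the probability that $R(i)$ is a 2-candidate and the best of $R(1),\dots,R(i+K-1)$ lies in $[1,d_1]$ by a two-step symmetry: the position of the minimum is uniform on $[1,i+K-1]$ (giving $\tfrac{d_1}{i+K-1}$), and conditional on its location the position of the second-smallest is uniform on the remaining $i+K-2$ slots (giving $\tfrac{1}{i+K-2}$). A 2-candidate stop at $i$ in fact forces the current best-of-seen into $[1,d_1]$, since a best in $[d_1+1,i-1]$ would itself have triggered a 1-candidate stop earlier; under this conditioning no 1-candidate can appear after $d_1$, so the only prior stops left to subtract are conditional 2-candidate stops, captured by $c(i)$ using $h$ (the conditional version) rather than $g$. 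The identity for $h_s(a)$ is the conditional analog of the $g_s$ argument: drop the $\tfrac{d_1}{i+K-1}$ factor, since $\tfrac{1}{i+K-2}$ is already the conditional probability that $i$ is the second-smallest given that the smallest lies in $[1,d_1]$, and keep $c(i)$ unchanged.

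The main obstacle will be making the sliding-rule exclusion on $m'\in[i-K+1,i-1]$ fully rigorous in the 2-candidate case, where one must use that the sliding rule halts at a 2-candidate only when the current best-of-seen is outside the window, and then show this requirement is incompatible with the relevant conditioning on $R(i)$: on $R(i)$ being a 1-candidate for the $f_q$ identity, and on $R(i)$ being a 2-candidate with the best-of-seen in $[1,d_1]$ for the $g_s$ and $h_s$ identities. Once that exclusion is settled, the rest of the bookkeeping is a direct extension of the argument for Lemma~\ref{importantthm}.
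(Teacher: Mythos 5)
Your proposal takes essentially the same route as the paper's proof: identify the per-index stopping probability for each candidate type ($\frac{1}{i+K-1}$ for 1-candidates, $\frac{1}{i+K-2}$ conditionally and $\frac{d_1}{i+K-1}\cdot\frac{1}{i+K-2}$ jointly for 2-candidates), subtract the probabilities of stopping earlier in blocks of $K$ exactly as in Lemma~\ref{importantthm}, and use $h$ rather than $g$ inside $c(i)$ because the restriction of the best-of-seen to $[1,d_1]$ is already accounted for. Your handling of the sliding-rule exclusion on $[i-K+1,i-1]$ and of why $t(i)$ must bundle both the $f$ and $g$ terms is in fact more explicit than the paper's, which simply defers to the Best-1 argument.
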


\begin{proof}
For the function $h$, the probability of stopping at a 2-candidate at an index $i$ given that the best applicant out of all seen applicants is in $[1,d_1]$ is $\frac{1}{i+K-2}$, because 1 of the $(i+K-1)$ indices is occupied by the best applicant so far. In addition, probabilities of stopping earlier can be subtracted in blocks of $K$ as in the proof of Lemma \ref{importantthm}. Therefore, 
$$h_s(a)=\sum_{i=d_2+(s-1)K+1}^{a} \left(\frac{1}{i+K-2}\right) \left(1-c(i) \right).$$

For the function $g$, an additional term $\frac{d_1}{i+K-1}$ is added to guarantee that the best applicant out of all seen applicants is in the first $d_1$ indices. When subtracting the probabilities of stopping before, we use the function $h$ because the additional term already accounts for the best applicant so far being restricted to $[1,d_1]$. Therefore, 
$$g_s(a)=\sum_{i=d_2+(s-1)K+1}^{a} \left( \frac{d_1}{i+K-1} \right) \left( \frac{1}{i+K-2}\right) \left(1-c(i) \right).$$

Finally, for the function $f$, the probability of having a 1-candidate at an index $i$ is $\frac{1}{i+K-1}$, while the probabilities of stopping at earlier candidates need to be subtracted. Therefore,
\begin{equation*}
f_q(a)=\sum_{i=d_1+(q-1)K+1}^{a} \left( \frac{1}{i+K-1}\right) \left(1-t(i) \right). \qedhere
\end{equation*}
\end{proof}

Now, let $\sigma_1(a)$ return the probability of winning with a candidate between 1 and $a$ for subcase (i). Let $\sigma_2(a)$ return the probability of winning with a candidate between 1 and $a$ for subcase (ii). Let $\sigma_3(a)$ return the probability of winning with a candidate between 1 and $a$ for subcase (iii). For $a\leq d_2$, $\sigma_1(a)=0$, because we do not choose 2-candidates before $d_2$. For $a\leq d_1$, $\sigma_2(a)= \sigma_3(a)=0$ because we do not choose candidates before $d_1$. We now present recursive formulas for the $\sigma$ functions.

\begin{theorem} \label{Best2Thm}
Let $q=\lceil\frac{a-d_1}{K}\rceil$ and $s=\lceil \frac{a-d_2}{K}\rceil$. Then,
$$\sigma_1(a)=\sigma_1(d_2+(s-1)K) + \sum_{i=d_2+(s-1)K+1}^{a} \frac{d_1}{N}\left( \frac{1}{N-1}\right) \left(1-c(i)\right),$$
$$\sigma_2(a)=\sigma_2(d_2+(s-1)K) + \sum_{i=d_2+(s-1)K+1}^{a} \frac{d_1}{N}\left( \frac{1}{N-1}\right) \left(1-c(i)\right),$$
$$\sigma_3(a)=\sigma_3(d_1+(q-1)K)+\sum_{i=d_1+(q-1)K+1}^{a} 2 \left( \frac{N-i}{N(N-1)}\right) \left(1-t(i) \right).$$
\end{theorem}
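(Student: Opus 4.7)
The plan is to establish each of the three identities by decomposing the winning event according to a single pivotal index $i$ and then rewriting the sum in block form to obtain the recursion. The recursive splitting itself is routine bookkeeping once a pointwise identity of the form $\sigma_k(a)=\sum_{i \in \mathrm{range}}(\mathrm{winning\ contribution\ at\ index\ }i)$ is established: we simply break the outer sum at the last block boundary ($d_1+(q-1)K$ or $d_2+(s-1)K$) to separate $\sigma_k(a)$ into the contribution from earlier blocks and the contribution from the current block.

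For $\sigma_1$ (subcase (i), $j_1\leq d_1$ and $j_2>d_2$) I would fix $i$ to be the position of $j_2$, the only applicant whose selection yields a win in this subcase. The joint probability is $\Pr(j_1\leq d_1,j_2=i)=\frac{d_1}{N(N-1)}$, and conditional on this event the interviewer reaches $j_2$ precisely when no 2-candidate has already stopped the sliding window, an event of probability $1-c(i)$. Here $c$ is built from $h$, which conditions on the best-seen applicant lying in $[1,d_1]$; this condition holds automatically in subcase (i) because for $m\leq i-K$ the applicant $j_2=i$ lies past the window, making $j_1$ the best seen. The complementary range $m\in(i-K,i-1]$ contributes no stop because $j_2=i$ then lies in every such window with better rank than every other applicant there, so the best candidate in the window sits at position $>1$ and the window simply slides.

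The argument for $\sigma_2$ (subcase (ii), $j_2\leq d_1$ and $j_1>d_1$) mirrors that of $\sigma_1$ with the roles of $j_1$ and $j_2$ interchanged; the pivotal index is now the position of $j_1$. Two routine checks are required: that no 1-candidate can occur at any $m\in(d_1,j_1-1]$ (for $m<j_1-K+1$ the best seen is $j_2\leq d_1<m$, and for $m\geq j_1-K+1$ the window contains $j_1$ with better rank), so only 2-candidate stops contribute and are again captured by $c(i)$; and the same near-window argument as for $\sigma_1$ rules out stops in $(i-K,i-1]$. Contributions from $j_1\in(d_1,d_2]$ (an automatic win once the window reaches $j_1$) are absorbed into the boundary of the recursion.

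For $\sigma_3$ (subcase (iii), $j_1,j_2>d_1$) the pivotal index is $i^*:=\min(j_1,j_2)$, with $\Pr(i^*=i)=\Pr(j_1=i,j_2>i)+\Pr(j_2=i,j_1>i)=\frac{2(N-i)}{N(N-1)}$. The key structural lemma is that, conditional on $i^*=i$ and on no stop in $[d_1+1,i-K]$, the interviewer always wins: if $i^*=j_1$ the window reaches $[i,i+K-1]$ and $j_1$ is selected; if $i^*=j_2$ with $j_1\geq j_2+K$ then $j_2$ is itself a 1-candidate at its own window and is selected; and if $i^*=j_2$ with $j_1\in(j_2,j_2+K-1]$ then $j_1$ is the best candidate in every intermediate window at position $>1$, so the window slides all the way to $[j_1,j_1+K-1]$ and $j_1$ is selected. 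The main obstacle is verifying that $(1-t(i))$ really equals the probability of reaching $i^*$ without stopping: although $g$ embeds the condition ``best-seen in $[1,d_1]$'', any \emph{first} 2-candidate stop at $m>d_2$ automatically has this property, for if the best in $[1,m+K-1]$ lay at some $p^*\in(d_1,m-1]$ then the applicant at $p^*$ would itself be a 1-candidate at window $[p^*,p^*+K-1]$ and the interviewer would have already stopped there, contradicting ``first stop at $m$''. Granting this, $g$ gives the first-stop probability at 2-candidates, $f$ does the same for 1-candidates, and their sum $t(i)$ is exactly the probability of a first stop in $[d_1+1,i-K]$; the block-wise recursion then follows by partitioning the outer sum as in the other cases.
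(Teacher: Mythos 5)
Your proposal is correct and follows essentially the same route as the paper: decompose each subcase by the pivotal winning index, weight by the per-index placement probability ($\frac{d_1}{N(N-1)}$ or $\frac{2(N-i)}{N(N-1)}$) times the probability $1-c(i)$ or $1-t(i)$ of not having stopped earlier, and split the sum at the last block boundary. The only difference is that you explicitly verify several facts the paper leaves implicit (that the conditioning built into $h$ is automatic in each subcase, that no spurious 1-candidates arise in subcase (ii), and that reaching $\min(j_1,j_2)$ guarantees a win in subcase (iii)), which strengthens rather than changes the argument.
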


\begin{proof}
We find $\sigma_1(a)$ by summing $\sigma_1(d_2+(s-1)K)$ with the probability of winning with a candidate in $[(d_2+(s-1)K), a]$. The probability of stopping at $j_2$ in subcase (i) is $\frac{d_1}{N}\Big( \frac{1}{N-1}\Big)$, because the best applicant has to be among the first $d_1$ applicants After subtracting  probabilities of stopping earlier, we find 
\begin{equation*}
\sigma_1(a)=\sigma_1(d_2+(s-1)K) + \sum_{i=d_2+(s-1)K+1}^{a} \frac{d_1}{N}\left( \frac{1}{N-1}\right) \left(1-c(i)\right).
\end{equation*}

The probability of stopping at $j_1$ in subcase (ii) similarly is $\frac{d_1}{N}\left( \frac{1}{N-1}\right)$. Therefore,
\begin{equation*}
\sigma_2(a)=\sigma_2(d_2+(s-1)K) + \sum_{i=d_2+(s-1)K+1}^{a} \frac{d_1}{N}\left( \frac{1}{N-1}\right) (1-c(i)).
\end{equation*}

Finally, the probability of stopping at either $j_1$ or $j_2$ in subcase (iii) is $2 \Big( \frac{N-i}{N(N-1)}\Big)$, because permuting $j_1$ and $j_2$ does not affect $\Pr(\Win)$. Accounting for not stopping earlier yields
\begin{equation*}\sigma_3(a)=\sigma_3(d_1+(q-1)K)+\sum_{i=d_1+(q-1)K+1}^{a} 2 \Big( \frac{N-i}{N(N-1)}\Big) \left(1-t(i) \right). \qedhere
\end{equation*}
\end{proof}
We now find that $\Pr(\Win)=\sigma_1(n)+\sigma_2(n) + \sigma_3(n)$.

For large $N$ and large $K$, we normalize the functions so that $H\left(\frac{a}{N}\right) \approx h(a)$, $G\left(\frac{a}{N}\right) \approx g(a)$, $F\left(\frac{a}{N}\right) \approx f(a)$, and $\tau_l(\frac{a}{N})\approx \sigma_l(a)$ for $l \in \{1,2,3\}$. We let $w=\frac{K}{N}$, $x=\frac{a}{N}$, $\rho_1=\frac{d_1}{N}$, and $\rho_2=\frac{d_2}{N}$, and write the asymptotic functions using integrals:
$$H_s(x)=\int\limits_{\rho_2+(s-1)w}^{x}\left(\left( \frac{1}{v+w} \right) \left(1-C(v) \right)\right) dv,$$ 
$$G_s(x)=\int\limits _{\rho_2+(s-1)}^{w} \left( \left( \frac{\rho_1}{v+w} \right) \left( \frac{1}{v+w}\right) \left(1-C(v) \right)\right) dv,$$
$$F_q(x)=\int\limits_{\rho_1+(s-1)w}^{x} \left( \left( \frac{1}{v+w}\right) \left(1-T(v) \right) \right) dv,$$
where
$$C(v)=\sum_{r=-1}^{\lceil \frac{v-\rho_2}{w}\rceil-2}(H_r(\rho_2+rw))+ H_{\lceil \frac{v-\rho_2}{w}\rceil-1}(v-w),$$
and
$$T(v)=\sum_{r=-1}^{\lceil \frac{v-\rho_2}{w}\rceil-2}(F_r(\rho_1+rw)) + F_{\lceil \frac{v-\rho_1}{w}\rceil-1}(v-w) +\sum_{r=-1}^{\lceil \frac{v-\rho_2}{w}\rceil-2}(G_r(\rho_2+rw)) + G_{\lceil \frac{v-\rho_2}{w}\rceil-1}(v-K).$$
Similarly,
$$\tau_1(x)=\tau_1(\rho_2+(s-1)w) + \int\limits_{\rho_2+(s-1)w}^{x} \left(\rho_1 (1-C(v))\right) dv,$$
$$\tau_2(x)=\tau_2(\rho_2+(s-1)w) + \int\limits_{\rho_2+(s-1)w}^{x} \left(\rho_1 (1-C(v)) \right) dv,$$
$$\tau_3(x)=\tau_3(\rho_1+(q-1)w)+\int\limits_{\rho_1+(q-1)w}^{x} \left(2(1-v) (1-T(v))\right) dv.$$ 

As with the Best-1 case, $\Pr(\Win)=\tau_1(1)+\tau_2(1)+\tau_3(1)$ depends only on $w$, $\rho_1$, and $\rho_2$ for large $N$. When finding the optimal $\rho_1$ and $\rho_2$, or $\rho_1^*$ and $\rho_2^*$, the two systems of equations $\frac{d\Pr(\Win)}{d\rho_1}=0$ and $\frac{d\Pr(\Win)}{d\rho_2}=0$, are solved, and thus $\rho_1^*$, $\rho_2^*$, and the optimal $\Pr(\Win)$ only depend on $\frac{K}{N}$ for $N$ large. We now look at asymptotics for large $K$ and large $N$.

\subsection{Asymptotic Optimal Thresholds: Large $N$, Fixed $\frac{K}{N}$ }

As with the Best-1 case, we can use the recursions for large $N$ to find the optimal normalized thresholds $\rho_1^*$ and $\rho_2^*$ as a function of the normalized window size $w$. However, MATLAB was not able to compute the recursions in integral form. As a result, we present the optimal normalized thresholds for the normalized window size $w$, and three cases: $N=10$, $N=100$, $N=1000$. A spline interpolation of the optimal normalized thresholds for $N=1000$, along with the values of the optimal normalized thresholds for $N=10$ and $N=100$ for select normalized window sizes is shown in Figure \ref{Figg}.
\begin{figure} [h]
\centering
\includegraphics[width=\textwidth]{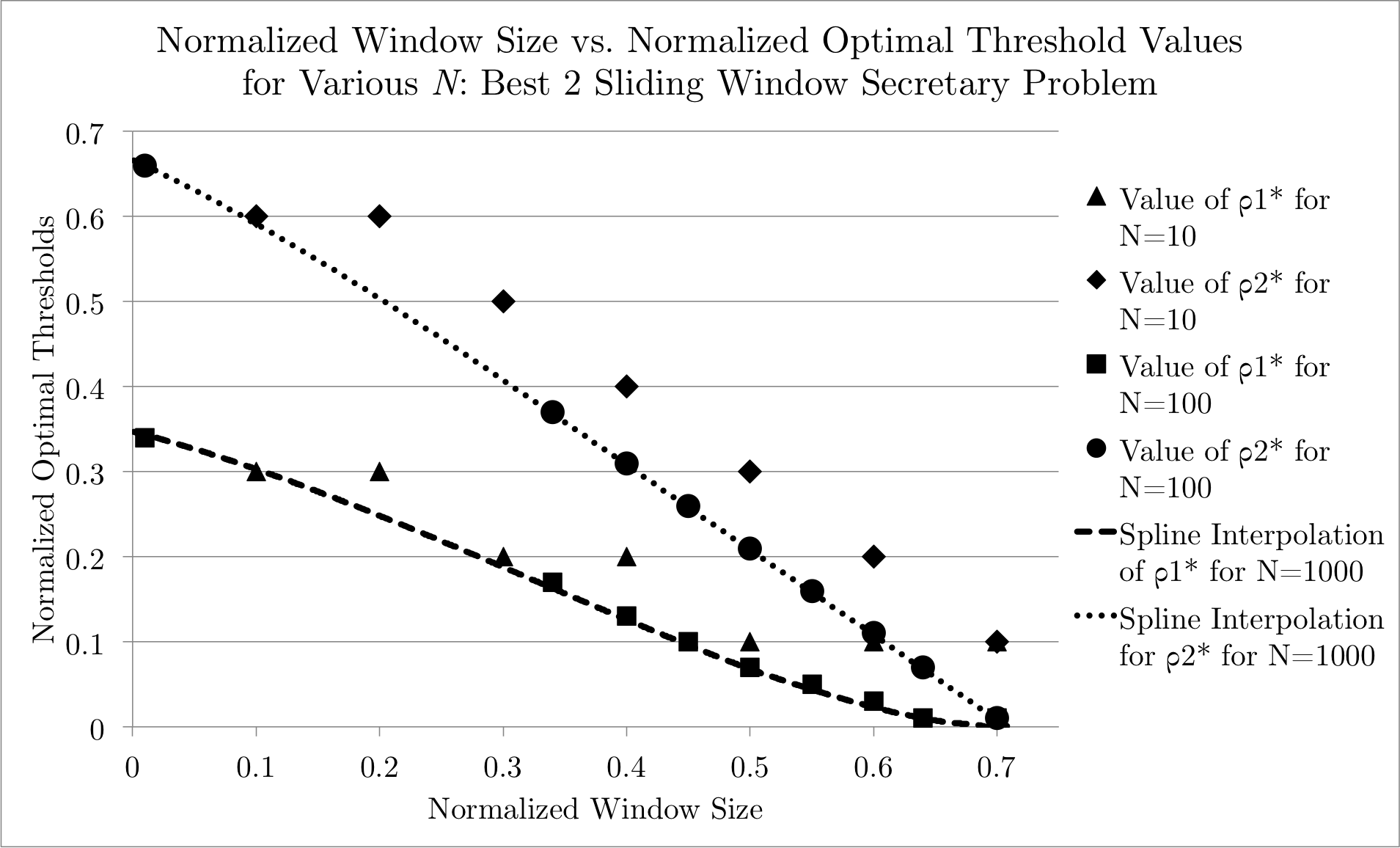}
\caption{The Variation of the Normalized Thresholds $\rho_1^{*} =\frac{d^*_1}{N}$ and $\rho_2^*=\frac{d^*_2}{N}$ varies with the Normalized Window Size $w=\frac{K}{N}$ for $N=10$, $20$, $100$}
\label{Figg}
\end{figure}

In addition, a spline interpolation of select values of $K$ and the probability of winning with that value of $K$ is shown in Figure \ref{Figg2} for $N=100$ to show what values of $K$  are required to guarantee certain probabilities of winning.if an applicant is a candidate.

\begin{figure} [h]
\centering
\includegraphics[width=\textwidth]{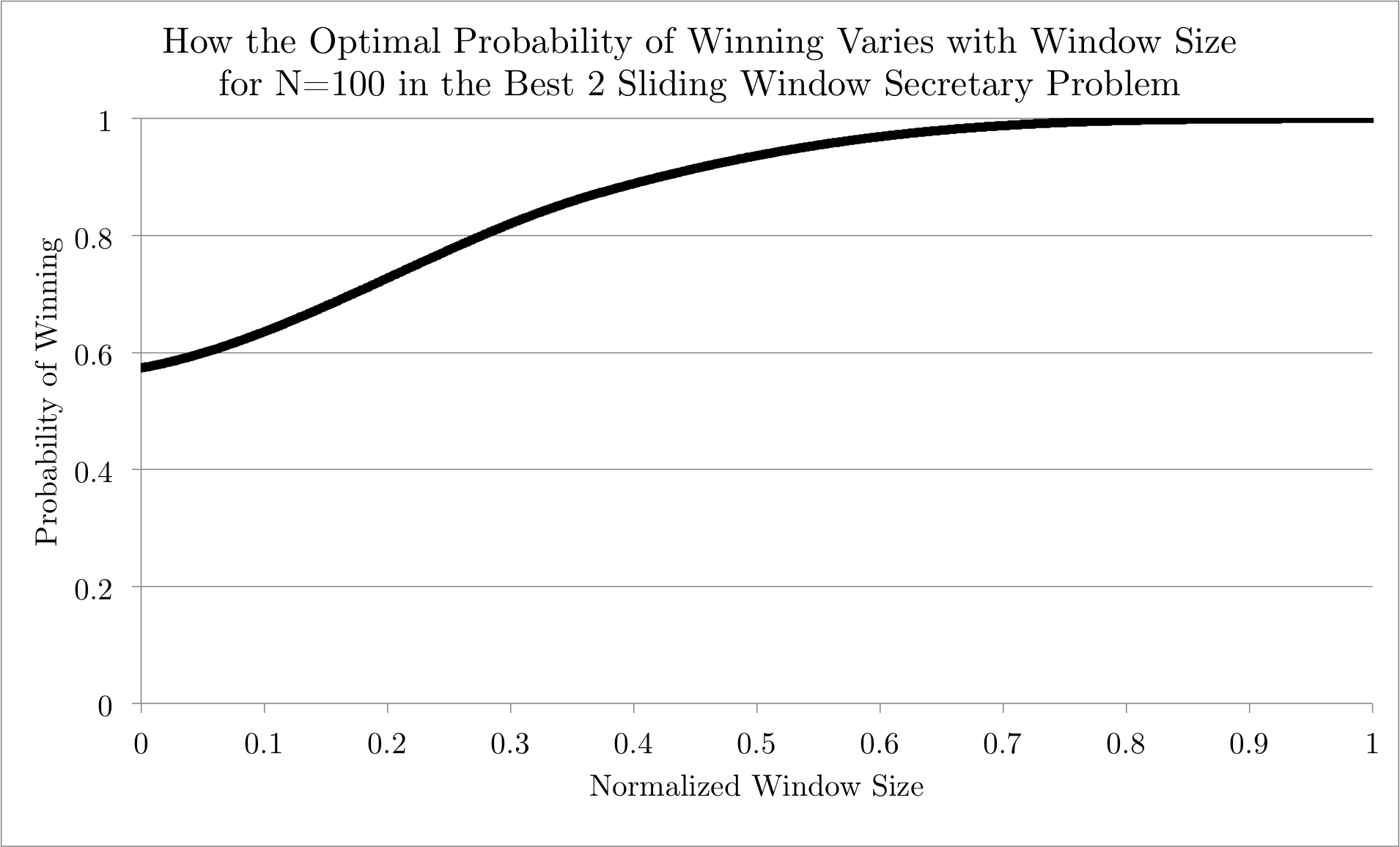}
\caption{The Variation in $\Pr(\Win)$ for Different Values of the Normalized Window Size $x=\frac{K}{N}$ for large $N$ for the Best-2 Sliding-Window Secretary Problem}
\label{Figg2}
\end{figure}

\subsection{Extensions to Winning with One of the Top $L$ Applicants}
Our analysis generalizes in a straightforward way to the top $L$ case, where
the interviewer wins if one of the Top $L$ applicants is chosen. Because there will be
$L$ different types of candidates in the Top-$L$ problem, and it follows from Theorem \ref{Best2Strategy} that the optimal strategy has $L$ thresholds. Equations similar to those in Theorem \ref{Best2Thm} can be used to find the probability of winning for various window sizes, some fixed number of applicants, and different threshold values. 


\section{The Sliding-Window Problem: The \textit{2-Choice Case}} \label{2Choice}
We now examine a Sliding-Window Secretary Problem similar to the Best-1 Case, where we grant the interviewer the ability to choose two applicants and a win occurs if either of the two applicants is the best overall. Again, the same \textit{sliding rule} is implemented because
it costs us nothing. We first show that the 2-Choice Case has two thresholds and then the
optiomal decision strastegy given the two thresholds.

\begin{theorem} \label{2ChoiceStrategy}
The optimal strategy has most 2 thresholds for the 2-Choice Case.
Given thresholds $\delta^*_1$ and $\delta^*_2$, the optimal strategy is to reject the first $\delta^*_1$ applicants, to choose the first candidate, $m_1$, after $\delta^*_1$ with the sliding rule after $\delta^*_1$, and then to choose the first candidate, $m_2$, after both $m_1$ and $\delta^*_2$.
\end{theorem}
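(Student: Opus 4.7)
The plan is to mirror the single-crossing arguments of Theorem~\ref{Cor1} and Theorem~\ref{Best2Strategy}, analyzing the two picks in reverse order. Because the sliding rule costs nothing, it suffices to characterize the interviewer's decision at each moment when a candidate (the best applicant seen so far) sits at the first index of the current window. I will first pin down the optimal rule for the second choice, assuming the first has been committed, and then, with that rule fixed, analyze the first choice. In each step the argument compares $\Pr(\Win\mid\text{choose})$ and $\Pr(\Win\mid\text{reject})$ as functions of the current window index $i$, shows the two curves cross exactly once, and reads off the smallest $i$ on the ``choose'' side as the threshold.

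For the second choice, suppose $m_1$ has already been selected. Any later candidate at window index $i$ is, by definition, strictly better than $m_1$, so choosing it as $m_2$ wins if and only if it is best overall, an event of probability $\frac{i+K-1}{N}$. Rejecting instead defers a potential win to a still-later candidate, and by the same reasoning used in the proof of Theorem~\ref{Cor1}, $\Pr(\Win\mid\text{reject})$ is decreasing in $i$. Crucially, neither expression depends on where $m_1$ was selected; both are functions of the window index and the remaining uniform rank distribution alone. The single crossing of these monotone curves thus defines an absolute threshold $\delta^*_2$, and the optimal second-pick rule is ``take the first candidate whose window index exceeds $\delta^*_2$.''

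For the first choice, assume the second-choice rule with threshold $\delta^*_2$ is in force. At a candidate with window index $i$, $\Pr(\Win\mid\text{choose }m_1)$ splits into disjoint contributions: probability $\frac{i+K-1}{N}$ that $m_1$ is best overall, plus the probability that the overall best lies in $[i+K,N]$ and is captured by the $\delta^*_2$ rule. On the other side, $\Pr(\Win\mid\text{reject }m_1)$ is decreasing in $i$, because rejection reduces to the same two-pick problem on a strictly shorter remaining horizon---an argument identical in spirit to Lemma~\ref{Lemma3}. At $i=N-K+1$ we have $\Pr(\Win\mid\text{choose})=1>0=\Pr(\Win\mid\text{reject})$, so the set of indices at which choose strictly beats reject is nonempty, and its least element is $\delta^*_1+1$; this gives the two-threshold structure, with the ``at most'' language of the statement absorbing the degenerate cases $\delta^*_1=0$ and $\delta^*_1=\delta^*_2$.

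The main obstacle is verifying that the two curves for the first choice cross exactly once, since the ``backup via $m_2$'' contribution inside $\Pr(\Win\mid\text{choose }m_1)$ can shrink as $i$ grows and the future horizon contracts, so $\Pr(\Win\mid\text{choose }m_1)$ is not obviously monotone in $i$. The cleanest route is a term-by-term exchange: when $i$ increases by $1$, position $i+K$ moves from the backup region into the safe-via-$m_1$ region, upgrading its contribution from a capture probability at most $1$ to the sure value $1$, while the positions strictly farther out are handled by the same $\delta^*_2$ rule and so contribute unchanged capture probabilities. Hence $\Pr(\Win\mid\text{choose }m_1)$ is nondecreasing in $i$, and combined with the strict decrease of $\Pr(\Win\mid\text{reject }m_1)$ this forces the single crossing and identifies $\delta^*_1$.
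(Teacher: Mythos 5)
Your proposal is correct and follows essentially the same route as the paper: analyze the second pick first to get $\delta^*_2$ via the single-crossing of $\frac{i+K-1}{N}$ against a decreasing rejection probability, then establish the first threshold by showing $\Pr(\Win\mid\text{choose }m_1\text{ at }i)$ is nondecreasing while the rejection probability decreases. The only difference is cosmetic: where you condition on the position of the best applicant and argue term by term, the paper writes $q(i)=\frac{i+K-1}{N}+\bigl(1-\frac{i+K-1}{N}\bigr)p(i)$ with $p(i)$ the backup capture probability and verifies $q(i+1)-q(i)\geq 0$ algebraically from the monotonicity of $p$.
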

\begin{proof}
We first prove that the interviewer's second choice has an optimal threshold, $\delta^*_2$. By the sliding rule,
$$\Pr(\Win \mbox{with candidate $i$ as a second choice})= \frac{i+K-1}{N}.$$
As with the proof to Theorem \ref{Cor1}, the probability of rejecting candidate $i$ and winning decreases in $i$, and is lower than the probability of choosing candidate $i$ and winning if $i=N-K$. Therefore, as with the proof to Theorem \ref{Cor1}, for some optimal threshold $\delta^*_2$, choosing the first candidate after $\delta^*_2$ as a second choice maximizes the probability of winning.

We now prove that the interviewer's first choice has an optimal threshold, $\delta^*_1$. We consider the function $q(i)$, the probability of winning if the interviewer's first choice is candidate $i$, and $p(i)$, the probability of choosing the best applicant as a second choice given that the best applicant is after candidate $i$. Because the interviewer wins either if candidate $i$ is the best applicant, or if the interviewer's second choice after $i$ is the best applicant,
$$q(i) = \frac{i+K-1}{N} + \left(1-\frac{i+K-1}{N}\right) p(i).$$
If the best applicant is after index $i$, the interviewer finds the best applicant with higher probability if there are fewer applicants after $i$ and thus if $i$ is larger. Therefore $p(i)$ increases in $i$. We show that $q(i)$ increases in $i$ by computing $q(i+1)-q(i)$:
$$q(i+1)-q(i)=\frac{1-p(i)}{N}+\left(1-\frac{i+K}{N}\right)(p(i+1)-p(i)).$$
Because $p(i+1) \leq p(i)$ and $p(i)\leq 1$, $q(i+1)-q(i)\geq 0$ and $q(i)$ increases in $i$. Additionally, as with the proof to Theorem \ref{Cor1}, the probability of rejecting candidate $i$ and winning decreases in $i$, and is lower than the probability of choosing candidate $i$ and winning if $i=N-K$. Therefore, as with the proof to Theorem \ref{Cor1}, for some optimal threshold $\delta^*_1$, choosing the first candidate after $\delta^*_1$ as a second choice maximizes the probability of winning.
\end{proof}

Let $\delta^*_1$ and $\delta^*_2$ be as defined in Theorem \ref{2ChoiceStrategy}. Then it is optimal to reject all applicants before $\delta^*_1+1$, to choose the first candidate to appear after $\delta^*_1$ with the sliding rule, and to choose another candidate who is the first to appear after both the first candidate and $\delta^*_2$. Note that $\delta^*_2 \geq \delta^*_1 +K$ because a block of $K$ applicants cannot have two candidates by the sliding rule. The interviewer can win in two mutually exclusive subcases:
\begin{enumerate} [(i)]
\item The first choice is the winning choice
\item The second choice is the winning choice and the first choice was made before $\delta^*_2-K+2$.
\item The second choice is the winning choice and the first choice was made after $\delta^*_2-K+1$. 
\end{enumerate}

\subsection{Special Cases for $K$}
We first find that the optimal probability of winning is $1$ for a window size of $K\geq \frac{N}{2}$.

\begin{theorem}
The optimal probability of winning for the interviewer is $1$ if $K \geq \frac{N}{2}$. 
\end{theorem}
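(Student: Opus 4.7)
The plan is to construct an explicit strategy that always selects the best applicant as one of its two choices, so that $\Pr(\Win)=1$ directly. The hypothesis is used in the form $K \geq \lceil N/2 \rceil$ (using that $K$ is an integer): this is exactly enough for the window to hold either the first $\lceil N/2\rceil$ or the last $\lfloor N/2\rfloor$ applicants all at once.

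I would propose partitioning the positions into $A = [1,\lceil N/2\rceil]$ and $B = [\lceil N/2\rceil+1,N]$, and devoting the first pick to the best applicant in $A$ and the second pick to the best applicant in $B$. Concretely: let the window fill with all of $A$ by interviewing its applicants without acting (permissible since $|A|\leq K$); then, with $a_1$ denoting the position of the best-ranked applicant in $A$, reject positions $1,\ldots,a_1-1$ from the front of the window, accept $a_1$ as the first choice, and reject $a_1+1,\ldots,\lceil N/2\rceil$ so that the window becomes empty; next, interview all of $B$ so the window fills with $B$; finally, with $a_2$ denoting the position of the best-ranked applicant in $B$, reject positions $\lceil N/2\rceil+1,\ldots,a_2-1$ from the front of the window and accept $a_2$ as the second choice.

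Admissibility is immediate: at every moment the pending count is at most $\max(|A|,|B|)\leq K$, and every disposition either rejects or accepts the smallest-indexed applicant currently in the window, so the rule that the smallest index be dealt with before a new interview is respected. Correctness is also immediate: the overall best applicant lies in exactly one of $A$ and $B$; if it lies in $A$ it must equal $a_1$ and is accepted as the first choice, and if it lies in $B$ it must equal $a_2$ and is accepted as the second choice. The interviewer thus wins for every permutation, and the main subtlety lies in the window-accounting bookkeeping rather than in any genuine mathematical obstacle.
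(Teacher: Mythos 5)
Your proof is correct, but it takes a genuinely different route from the paper. You construct an explicit strategy from scratch: split the positions into the first $\lceil N/2\rceil$ and the last $\lfloor N/2\rfloor$, note that $K\geq\lceil N/2\rceil$ lets each half sit in the window in its entirety, and spend one choice on the best of each half; since the overall best is the best of its half, you win on every permutation. The paper instead stays inside the threshold-plus-sliding-rule framework it has already set up: by the sliding rule two candidates cannot lie in the same block of $K$ consecutive positions, so with $2K\geq N$ there are at most two candidates in the whole sequence; the interviewer's two choices suffice to take all of them, and the overall best applicant is always a candidate. Your argument is more elementary and self-contained (it needs neither the notion of a candidate nor Theorem \ref{2ChoiceStrategy}), and your window-accounting is sound --- the pending count never exceeds $\max(\lceil N/2\rceil,\lfloor N/2\rfloor)\leq K$, and every acceptance or rejection acts on the smallest index in the window. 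What the paper's version buys in exchange is that its winning strategy is exactly the two-threshold strategy with $\delta_1=0$ and $\delta_2=K$, which is what lets the authors immediately read off the optimal thresholds in the sentence following the theorem; your half-and-half strategy proves the probability-one claim but does not by itself identify those thresholds.
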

\begin{proof}
Because there cannot be two candidates in the same block of $K$ applicants by the sliding rule, and because $2K>N$, there are only at most two candidates that the interviewer can consider. Because the interviewer has two choices, the interviewer can choose both candidates. Because the best applicant overall is guaranteed to be a candidate, the interviewer is guaranteed to win. 
\end{proof}

For the interviewer to choose all possible candidates for a window size of $K \geq \frac{N}{2}$, $\delta_1=0$ and $\delta_2=K$. Therefore, the optimal thresholds $\delta^*_1$ and $\delta^*_2$ are $0$ and $K$ respectively.

We now prove that as $K$ increases, $\Pr(\Win)$ strictly increases for fixed $N$.

\begin{lemma} \label{Lemma6}
Let $\delta^*_{K1}$ and $\delta^*_{K2}$ be the optimal first and second thresholds respectively for a window size of $K$, and let the same notation hold for a window size of $\kappa$. Then if $\kappa>K$, $\Pr(\Win \mid \kappa, \delta^*_{\kappa 1}, \delta^*_{\kappa 2})<\Pr(\Win \mid K, \delta^*_{K 1}, \delta^*_{K2})$. 
\end{lemma}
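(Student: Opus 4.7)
The plan is to mirror the two-step decomposition that delivered Lemmas \ref{Lemma2} and \ref{Lemma4}. I would interpose a hybrid quantity — the win probability with one window evaluated under the other window's optimal thresholds — and then invoke optimality to collapse one of the two resulting inequalities.

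Concretely, I would aim to establish the chain
\[
\Pr(\Win \mid \kappa, \delta^*_{\kappa 1}, \delta^*_{\kappa 2}) \;<\; \Pr(\Win \mid K, \delta^*_{\kappa 1}, \delta^*_{\kappa 2}) \;\leq\; \Pr(\Win \mid K, \delta^*_{K 1}, \delta^*_{K 2}),
\]
where the right-hand inequality is immediate from the optimality of $(\delta^*_{K 1}, \delta^*_{K 2})$ for the window $K$. The strict left inequality is the substantive content: I would couple both runs to a single uniform random permutation of ranks under the shared thresholds $(\delta^*_{\kappa 1}, \delta^*_{\kappa 2})$ and compare the resulting win events. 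Weak dominance between the two windows under the sliding rule is checked by having the interviewer in one setting simulate the other — by either declining to use extra slots or by committing at the appropriate index — and strictness is obtained by exhibiting a single permutation under which a winning candidate (either as first choice $m_1$ or second choice $m_2$) is captured by one window's sliding rule but missed by the other because the rule wakes up at a different index.

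The main obstacle I anticipate is handling the three mutually exclusive winning subcases listed after Theorem \ref{2ChoiceStrategy}. Subcase (i), where the first choice is the best applicant, essentially reduces to Lemma \ref{Lemma2} since only the first candidate's position relative to $\delta_1$ matters. Subcases (ii) and (iii) require joint tracking of the position of $m_1$ — which can itself shift with the window size because the sliding rule fires at different candidate indices — alongside the position of the best applicant $j$. My plan is to condition on the index of $m_1$ under each coupling and decompose the win probability block by block using the recursions for $f, g, h$ in Lemma \ref{importantt} and $\sigma_1, \sigma_2, \sigma_3$ in Theorem \ref{Best2Thm}, then compare the resulting contributions term by term. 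Subcase (iii) is the hard one because the second choice's availability depends on $m_1$'s position lying in a specific band relative to $\delta_2$, and the coupling must verify that changing the window size does not push $m_1$ across the $\delta_2 - K + 1$ boundary in a way that invalidates the comparison. Because the thresholds are held fixed across the strict inequality, no Simpson-style averaging can reverse the sign, exactly as in the proof of Lemma \ref{Lemma4}.
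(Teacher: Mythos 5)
Your proposal follows essentially the same route as the paper's proof: the identical three-term chain $\Pr(\Win \mid \kappa, \delta^*_{\kappa 1}, \delta^*_{\kappa 2}) < \Pr(\Win \mid K, \delta^*_{\kappa 1}, \delta^*_{\kappa 2}) \leq \Pr(\Win \mid K, \delta^*_{K 1}, \delta^*_{K 2})$, with the right inequality from optimality and the left from the Lemma \ref{Lemma2}-style argument that every winning sequence for the smaller window also wins for the larger one under shared thresholds, plus a witness sequence for strictness. Your additional discussion of the coupling and of subcases (ii) and (iii) is more careful than the paper, which simply asserts the dominance ``by the same argument as in Lemma \ref{Lemma2},'' but it is elaboration of the same argument rather than a different one.
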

\begin{proof}
We prove that $\Pr(\Win \mid \kappa, \delta^*_{\kappa 1}, \delta^*_{\kappa 2})< \Pr(\Win \mid K, \delta^*_{\kappa 1}, \delta^*_{\kappa 2}) \leq \Pr(\Win \mid K, \delta^*_{K 1}, \delta^*_{K2})$. Since $\delta^*_{K 1}$, and $\delta^*_{K2}$ are optimal for $K$, $\Pr(\Win \mid K, \delta^*_{\kappa 1}, \delta^*_{\kappa 2}) \leq \Pr(\Win \mid K, \delta^*_{K 1}, \delta^*_{K2})$, and we now prove that $\Pr(\Win \mid \kappa, \delta^*_{\kappa 1}, \delta^*_{\kappa 2})< \Pr(\Win \mid K, \delta^*_{\kappa 1}, \delta^*_{\kappa 2})$. 

By the same argument as in Lemma \ref{Lemma2}, every sequence of applicants that produces a win with thresholds $\delta^*_{\kappa 1}$, and $\delta^*_{\kappa 2}$, and a window size of $\kappa$ must also produce a win for identical thresholds and a window size of $K$. As in Lemma \ref{Lemma2}, we can construct a sequence of applicants such that the interviewer loses with thresholds $\delta^*_{\kappa 1}$, and $\delta^*_{\kappa 2}$ and a window size of $\kappa$, but wins with identical thresholds and a window size of $K$. Therefore, $\Pr(\Win \mid \kappa, \delta^*_{\kappa 1}, \delta^*_{\kappa 2})< \Pr(\Win \mid K, \delta^*_{\kappa 1}, \delta^*_{\kappa 2})$. 
\end{proof}

We now prove that as $K$ increases, the first optimal threshold decreases for fixed $N$. The second optimal threshold must be at least $K$ greater than the first optimal threshold so the second threshold may not necessarily decrease as $K$ increases. 
\begin{lemma} \label{Lemma7}
If $\kappa>K$, then $\delta^*_{\kappa 1}\geq \delta^*_{K1}$. 
\end{lemma}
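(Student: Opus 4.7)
The plan is to mirror the proofs of Lemma~\ref{Lemma3} and Lemma~\ref{Lemma5}, applied to the first-choice threshold characterized in Theorem~\ref{2ChoiceStrategy}. By that theorem, $\delta^*_{K 1}$ is the largest index at which rejecting a candidate as the first choice is (weakly) preferable to committing to it. Hence, to prove $\delta^*_{\kappa 1} \geq \delta^*_{K 1}$ it suffices to show a nesting property: whenever ``reject at $i$'' is weakly better than ``accept at $i$'' for window~$K$, the same is true for the larger window~$\kappa$. Then the largest index in the (smaller) $K$-rejection set is bounded by the largest index in the (larger) $\kappa$-rejection set, giving the claim.

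Writing $A_K(i)$ for the conditional probability of winning when a candidate at index~$i$ is taken as the first choice under window~$K$, and $B_K(i)$ for the probability of winning when that candidate is rejected, the proof of Theorem~\ref{2ChoiceStrategy} gives
$$A_K(i) \;=\; \frac{i+K-1}{N} + \left(1 - \frac{i+K-1}{N}\right) p_K(i),$$
where $p_K(i)$ is the conditional probability that the second choice (executed under the optimal $\delta^*_{K 2}$ rule) identifies the best applicant, given that the first choice at~$i$ was wrong. An analogous recursive expression for $B_K(i)$ follows from the block decomposition used in Lemma~\ref{importantt}. The heart of the argument is the difference inequality
$$B_\kappa(i) - A_\kappa(i) \;\geq\; B_K(i) - A_K(i),$$
which immediately nests the rejection sets: if the right side is $\geq 0$ then so is the left, which is exactly what is needed.

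I would establish this difference inequality by a coupling on a common uniformly random permutation of the $N$ applicants. Enlarging the window from~$K$ to~$\kappa$ strictly enlarges the interviewer's option set at every stage, and the marginal value of that extra flexibility is greater on the rejection branch than on the accept branch: on rejection, the wider window improves both the next first-choice opportunity and the eventual second choice, while on acceptance only the second choice benefits, and then only on the event (of conditional weight $1 - (i+K-1)/N$) that the first choice was in fact wrong.

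The main obstacle is that enlarging the window has two competing effects: it raises the direct term $(i+K-1)/N$ inside~$A$ linearly in~$K$, which favors accepting, and it lowers the per-index candidacy probability from $1/(m+K-1)$ to $1/(m+\kappa-1)$, which mutes the gain on the rejection branch. Verifying that the rejection-side gain still dominates the accept-side gain requires a block-by-block comparison using the recursions of Section~\ref{2Choice} — the 2-Choice analogues of Equations~\eqref{EqF1} and~\eqref{EqT1} that track the surviving second choice after a rejection at $i$ — in the same spirit as the proof of Lemma~\ref{Lemma4}.
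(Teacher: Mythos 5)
Your key inequality points in the wrong direction, and this is a substantive gap, not a presentational one. You propose to show that the \emph{rejection} region is nested upward in the window size --- that whenever rejecting a candidate at index $i$ as the first choice is weakly preferable under the smaller window, it remains so under the larger one, i.e.\ $B_\kappa(i)-A_\kappa(i)\geq B_K(i)-A_K(i)$ for $\kappa>K$ --- which would make the first threshold \emph{increase} with the window size. The true monotonicity, stated in the sentence immediately preceding the lemma (``as $K$ increases, the first optimal threshold decreases'') and argued in the paper's proof, is the opposite: it is the \emph{acceptance} region that grows as the window grows. (The printed hypothesis ``$\kappa>K$'' is inconsistent with the paper's own proof, which, exactly as in Lemma~\ref{Lemma3}, treats $K$ as the larger window; you have faithfully followed what appears to be a typo.) A concrete check that your nesting property cannot hold: for $K\geq\frac{N}{2}$ the optimal first threshold is $0$, while for $K=1$ the problem reduces to Gilbert and Mosteller's two-choice problem, whose first threshold is strictly positive for large $N$; if rejection regions were nested upward in $K$, the threshold could never drop to $0$.

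The obstacle you flag at the end --- that enlarging the window raises the direct term $\frac{i+K-1}{N}$ in $A_K(i)$, favoring acceptance --- is not a technicality to be checked; it is the dominant effect and precisely the reason the inequality you want is false. The paper's short proof runs the Lemma~\ref{Lemma3} argument in the correct direction: from the proof of Theorem~\ref{2ChoiceStrategy}, the probability of winning when candidate $i$ is taken as the first choice increases with the window size through that direct term, and the value of rejecting $i$ does not increase enough to compensate, so the least index at which acceptance beats rejection can only move toward $1$ as the window grows. To repair your argument you would need to reverse the direction of your difference inequality and then actually supply the comparison you defer to a ``block-by-block'' analysis: a bound showing that the gain to the rejection branch from widening the window is dominated by the gain to the acceptance branch coming from the direct term. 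As written, the coupling heuristic asserts the opposite dominance without proof, and the decisive computation is never carried out.
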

\begin{proof}
From the proof of Theorem \ref{2ChoiceStrategy}, for a candidate $i$, $\Pr(\Win \mid \mbox{$i$ is first choice, } K)>\Pr(\Win \mid \mbox{$i$ is first choice, } \kappa)$. Using a similar argument as in Lemma \ref{Lemma3}, for a candidate $i$, $\Pr(\Win \mid \mbox{Rejecting $i$, } K)>\Pr(\Win \mid \mbox{Rejecting $i$, } \kappa)$ for the first choice. By the same argument as in Lemma \ref{Lemma3},  $\delta^*_{\kappa 1}\geq \delta^*_{K1}$.
\end{proof}

\subsection{A Recursive Formula for the Probability of Winning}
We now analyze the problem for some window size $K$ and some threshold values $\delta_1$ and $\delta_2$. We again will use blocks of size $K$ because of the sliding rule. We modify the function $f$ in Lemma \ref{importantthm} as follows: we let $f(x,a)$ be the probability of choosing an applicant between indices $x+K\left(\lceil\frac{a-x}{K}\rceil-1\right)+1$ and $a$. For $a\leq x$, $f(x,a)=0$. For $a>x$, using the same argument as in Lemma \ref{importantthm}, we find that 
$$f(x,a)=\sum_{m=x+(\lceil\frac{a-x}{K}\rceil-1) K + 1} ^{a} \frac{1}{m+K-1}  \left(1 - \sum_{r=-1} ^{\lceil\frac{a-x}{K}\rceil-2} f(x,x+r K) - f(x,m-K) \right).$$

We now let $g(m,x,b)$ be the probability of making a choice at an index $m$, not making another choice until index $b-k+1$, given that the interviewer is guaranteed not to make a choice before index $x+1$, and the interviewer chooses the applicant at index $b$. Let $c(m,x)$ be the probability of making a choice at index $m$ given that the interviewer is guaranteed not to make a choice before index $x+1$. For $m\leq x$, $c(m,x)=0$, and for $m>x$, 
$$c(m,x)=\frac{1}{m+K-1}  \left(1 - \sum_{r=-1} ^{\lceil\frac{m-x}{K}\rceil-2} f(x,x+r K) - f(x,m-K) \right).$$
By the sliding rule, because the first and second choices cannot be in the same block of $K$ applicants, if $b-m< K$, $g(m,x,b)=0$. Otherwise, because there are no candidates between $m+1$ and $m+K-1$ by the sliding rule if $m$ is a candidate, and a random ordering of applicants guarantees that finding a candidate starting at index $m+K$ is independent of finding a candidate at index $m$, we can multiply $c(m,x)$ by the probability that we find no other candidate after index $m+K-1$, given that applicant $m$ is a candidate that we have chosen. Therefore,
$$g(m,x,b)=c(m,x)\left(1 - \sum_{r=-2} ^{\lceil\frac{b-m-K+1}{K}\rceil-1} f(m+K-1,m+rK-1) - f(m+K-1,b-K)  \right).$$

We now divide the probability of winning as follows: $\sigma_1(a)$ is the probability of winning between $[1,a]$ with subcase (i),  $\sigma_2(a)$ is the probability of winning between $[1,a]$ with subcase (ii), and $\sigma_3(a)$ is the probability of winning between $[1,a]$ with subcase (iii). For $a\leq \delta_1$, $\sigma_1(a)=0$, and for $a\leq \delta_2$, $\sigma_2(a)=\sigma_3(a)=0$. We compute $p_{b}$, the probability of making a choice between $\delta_1+1$ and $\delta_2-K+1$ as 
$$p_{b}=\sum_{r=0}^{\lceil \frac{\delta_2-\delta_1-K+1}{K} \rceil-1} f(\delta_1, \delta_1+rK) + f(\delta_1, \delta_2-K+1).$$

\begin{theorem}
If $a>\delta_1$ and $q=\lceil\frac{a-\delta_1}{K} \rceil$ then
$$\sigma_1(a)=\sigma_{1}(\delta_1+(q-1)K)+\frac{1}{N} \sum_{j=\delta_1+(q-1)K+1} ^{a} \left(1-\sum_{r=-1}^{q-2}  f(\delta_1, \delta_1+r K) - f(\delta_1,j-K)\right).$$
If $a>\delta^*_2$ and $q=\lceil\frac{a-\delta_2}{K} \rceil$ then
$$\sigma_2(a)=\sigma_{2}(\delta_2+(q-1)K)+\frac{p_{b}}{N} \sum_{j=\delta_2+(q-1)K+1} ^{a} \left(1-\sum_{r=-1}^{q-2}  f(\delta_2, \delta_2+r K) - f(\delta_2,j-K)\right).$$
If $a>\delta^*_2$ then
$$\sigma_3(a)=\sigma_{3}(a-1) + \frac{1}{N}\sum_{m=\delta_2-K+2}^{a-1} g(m,\delta_1,a).$$
\end{theorem}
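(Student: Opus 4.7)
The plan is to mirror the derivation of Theorem \ref{impttheorem} for each of the three subcases in turn, peeling off the final block of $K$ applicants and using the functions $f$, $c$, and $g$ to bookkeep the probability of having not yet committed a choice. In each case the skeleton is the same: classify the winning sequence by the absolute position of the best applicant $j$, use $\Pr(\text{best at } j) = 1/N$ (together with $\Pr(\text{first choice in }[\delta_1+1,\delta_2-K+1]) = p_b$ where relevant), and multiply by the conditional probability that the interviewer's stopping pattern lands on the winning index.

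For $\sigma_1(a)$ I would first split the range $[\delta_1+1,a]$ at $\delta_1+(q-1)K$: the contribution from $j \le \delta_1+(q-1)K$ is exactly $\sigma_1(\delta_1+(q-1)K)$ by definition. For $j$ in the final (possibly partial) block, I note that conditional on the best being at $j$, no index in $[j-K+1,j-1]$ can be a candidate (since $j$ lies in each such window and outranks everyone), so the only way to still be searching at index $j$ is to have had no candidate in $[\delta_1+1,j-K]$. Breaking $[\delta_1+1,j-K]$ into blocks of $K$ yields the bracket $1-\sum_{r=-1}^{q-2}f(\delta_1,\delta_1+rK)-f(\delta_1,j-K)$ exactly as in Lemma \ref{importantthm}, and summing over $j$ in the last block, weighted by $1/N$, gives the stated identity.

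For $\sigma_2(a)$ the event in question factors into (a) the first choice occurs at some index $m\in[\delta_1+1,\delta_2-K+1]$, which by the definition of $p_b$ has probability $p_b$, and (b) after resuming at $\delta_2+1$, the second choice lands on the overall best at $j$. Piece (b) is structurally identical to the $\sigma_1$ analysis, but with threshold $\delta_2$ in place of $\delta_1$, so the bracket takes the same form with $f(\delta_2,\cdot)$. Multiplying (a) and (b) and again summing over $j$ in the final block and adding $\sigma_2(\delta_2+(q-1)K)$ delivers the recursion.

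For $\sigma_3(a)$ the first choice falls in $[\delta_2-K+2, a-1]$ and the second must coincide with $j = a$, which is itself the overall best. I would peel off the event $j\le a-1$ to account for the $\sigma_3(a-1)$ term, then treat $j=a$ directly: conditional on best at $a$ (probability $1/N$), the joint event that the first choice sits at some specific $m\in[\delta_2-K+2,a-1]$ and no intervening candidate appears in $[m+K,a-1]$ is precisely what $g(m,\delta_1,a)$ computes, so summing $g(m,\delta_1,a)/N$ over admissible $m$ yields the last formula.

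The main obstacle I expect is the factoring used for $\sigma_2$: the event shaping $p_b$ is determined by the relative ordering on $[1,\delta_2]$, while the no-stop event on $[\delta_2+1,j-K]$ depends on relative orderings involving $[1,j-1]$ as a whole, so these events are not literally independent. The clean way to handle this is to condition on best at $j$ and invoke the exchangeability of the remaining $N-1$ ranks, which makes the relative order on $[1,\delta_2]$ and the relative order on $[\delta_2+1,j-1]$ independent uniform permutations of disjoint rank sets; the bracketed factor in the formula for $\sigma_2$ then correctly captures the second conditional probability, while $p_b$ is recovered by marginalization. A block-by-block argument paralleling Lemma \ref{importantthm} provides a back-up route if the exchangeability argument proves delicate.
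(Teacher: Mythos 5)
Your proposal is correct and follows essentially the same route as the paper, which simply cites Theorem \ref{impttheorem} for $\sigma_1$ and for $\sigma_2$ (the latter with the extra factor $p_b$ for the condition that the first choice lands in $[\delta_1+1,\delta_2-K+1]$) and uses $g$ together with the total probability theorem for $\sigma_3$. The independence worry you raise for $\sigma_2$ is legitimate but resolves exactly as you suggest: the first-choice event, the no-candidate event on $[\delta_2+1,j-K]$, and the event that the overall best sits at $j$ are determined by disjoint blocks of the independent relative-rank sequence, so the product form holds.
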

\begin{proof}
The expression for $\sigma_1$ follows directly from Theorem \ref{impttheorem}. Similarly, the expression for $\sigma_2$ follows directly from Theorem \ref{impttheorem}, except with the added condition that a choice is made between $\delta_1+1$ and $\delta_2-K+1$. The probability of making a first choice at an index $m$ and making a second choice at index $j$ is given by $g(m,\delta_1,j-K+1)$, because the interviewer is guaranteed to choose the best applicant at index $j$ by the sliding rule if the interviewer does not make a choice between $m+1$ and $j-K$. Therefore, by the total probability theorem, we can add up the probabilities for all possible values of $m$ and find that 
\begin{equation*}
\sigma_3(a)=\sigma_{3}(a-1) + \frac{1}{N}\sum_{m=\delta_2-K+2}^{a-1} g(m,\delta_1,a-K+1). \qedhere
\end{equation*}
\end{proof}
It follows that $\sigma_1(N)+\sigma_2(N)+\sigma_3(N)=\Pr(\Win)$.
In order to normalize $\sigma_3$, we write $\sigma_3(a)$ as follows, where $q=\lceil \frac{a-\delta_2}{K} \rceil$: 
\begin{equation*}
\sigma_3(a)=\sigma_{3}(\delta_2 + (q-1)K) + \frac{1}{N}\sum_{j=\delta_2+(q-1)K+1}^{a} \left( \sum_{m=\delta_2-K+2}^{j-1} g(m,\delta_1,j-K+1)\right). 
\end{equation*}
We now look at large $K$ and large $N$. We first normalize $f$ as the function $F$ as done in earlier cases, by dividing all indices by $N$ and approximating $f$ with integrals. We similarly normalize $c$ as $C$ and $g$ as $G$. Let $\frac{K}{N}=w$, $\frac{\delta_1}{N}=\rho_1$, $\frac{\delta_2}{N}=\rho_2$, $\frac{a}{N}=\alpha$, $\frac{b}{N}=\beta$, $\frac{x}{N}=\gamma$, $\frac{j}{N}=\eta$, and $\frac{m}{N}=\mu$. Then,
$$F(\gamma,\alpha)= \int \limits_{\gamma + \left(\lceil \frac{\alpha-\gamma}{w}\rceil-1\right)w}^{\alpha} \frac{1}{\mu+w}\left(1 - \sum_{r=-1} ^{\lceil\frac{\alpha-\gamma}{w}\rceil-2} F(\gamma,\gamma+r w) - F(\gamma,\mu-w) \right) d\mu,$$
$$C(\mu,\gamma) = \frac{1}{\mu+w}\left(1 - \sum_{r=-1} ^{\lceil\frac{\alpha-\gamma}{w}\rceil-2} F(\gamma,\gamma+r w) - F(\gamma,\mu-w)\right),$$
$$G(\mu,\gamma,\beta)= C(\mu,\gamma)\left(1-\sum_{r=-2} ^{\lceil\frac{\beta-\mu-w}{w}\rceil-1} f(\mu+w,\mu+rw) - f(\mu+w,\beta-w)\right).$$

We now normalize $\sigma_i$ as $\tau_i$, where $i$ is 1, 2, or 3. We additionally normalize $p_b$ as $P_b$:
$$P_{b}=\sum_{r=0}^{\lceil \frac{\rho_2-\rho_1-w}{w} \rceil-1} F(\rho_1, \rho_1+rw) + F(\rho_1, \rho_2-w).$$
Then we find that if $q_1=\frac{\alpha-\rho_1}{w}$, and $q_2=\frac{\alpha-\rho_2}{w}$, 
$$\tau_1(\alpha)=\tau_{1}(\rho_1+(q_1-1)w)+\int \limits_{\rho_1+(q_1-1)w} ^{\alpha} \left(1-\sum_{r=-1}^{q_1-2}  F(\rho_1, \rho_1+r w) - F(\rho_1,\eta-w)\right) d\eta,$$
$$\tau_2(\alpha)=\tau_{2}(\rho_2+(q_2-1)w)+ P_b\int \limits_{\rho_2+(q_2-1)w} ^{\alpha} \left(1-\sum_{r=-1}^{q_2-2}  F(\rho_2, \rho_2+r w) - F(\rho_2,\eta-w)\right) d\eta,$$
$$\tau_3(\alpha)=\tau_{3}(\rho_1 + (q_2-1)w) + \int \limits_{\rho_2+(q_2-1)w}^{a} \left( \int \limits_{\rho_2-w}^{\eta} G(\mu,\rho_1,\eta-w)\right) d\eta.$$

\begin{figure} [h]
\centering
\includegraphics[width=\textwidth]{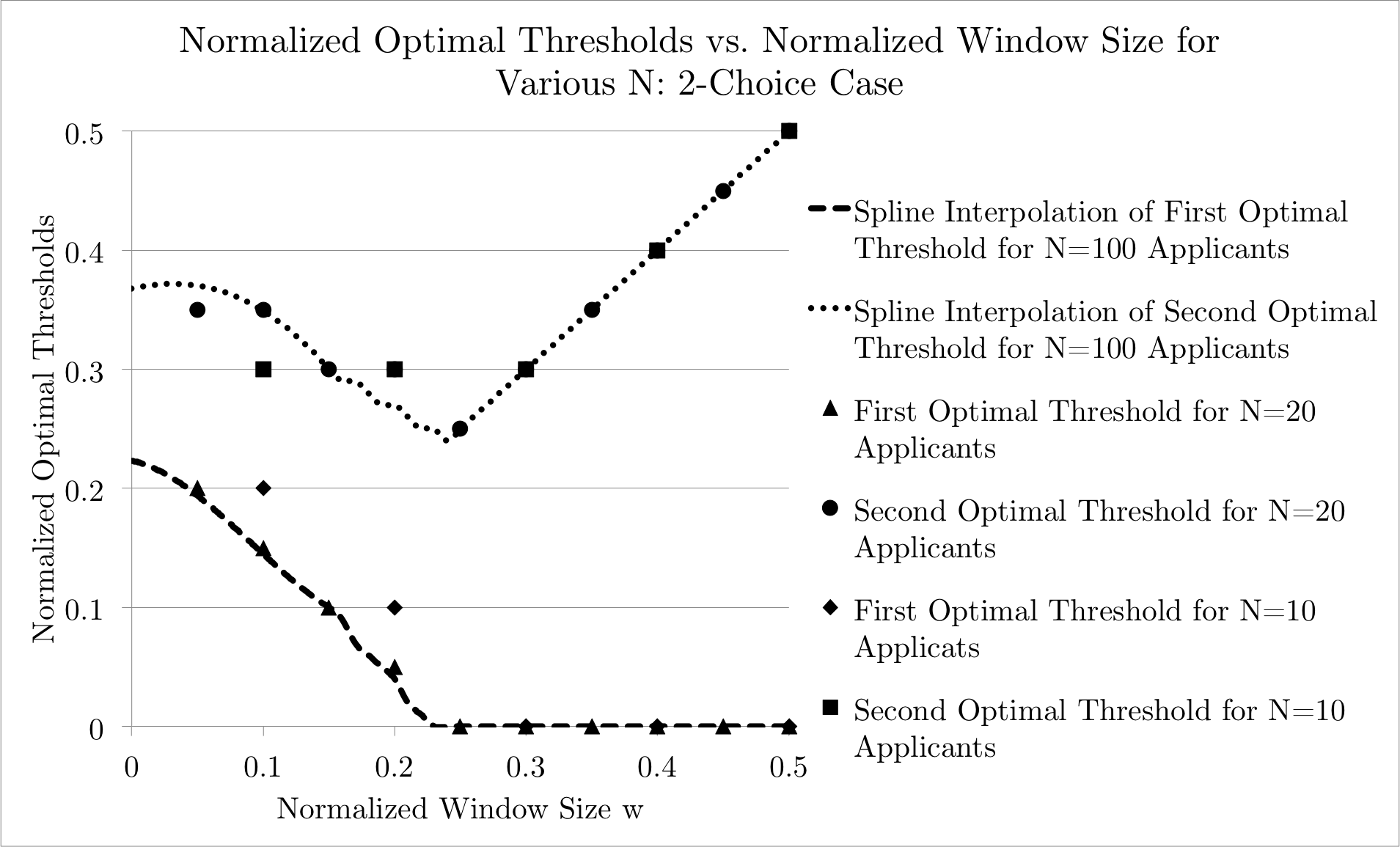}
\caption{The Variation of the Normalized Thresholds $\rho_1^{*} =\frac{\delta^*_1}{N}$ and $\rho_2^*=\frac{\delta^*_2}{N}$ with the Normalized Window Size $w=\frac{K}{N}$ for $N=10$, $20$, $100$}
\label{Fig2CT}
\end{figure}

Therefore $\Pr(\Win) \approx \tau_1(1)+\tau_2(1)+\tau_3(1)$, and as a result, $\Pr(\Win)$ only depends on $w$, $\rho_1$, and $\rho_2$ for large $N$. As with previous cases, the optimal $\Pr(\Win)$, the normalized first optimal threshold $\rho^*_1$, and the normalized second optimal threshold $\rho^*_2$ only depend on the normalized window size $w$. Figure \ref{Fig2CT} shows how the normalized thresholds depend on the normalized window size $w=\frac{K}{N}$. Similarly, Figure \ref{Fig2CP} shows how the optimal probability of winning depends on the normalized window size $w=\frac{K}{N}$. 

\begin{figure} [h]
\centering
\includegraphics[width=\textwidth]{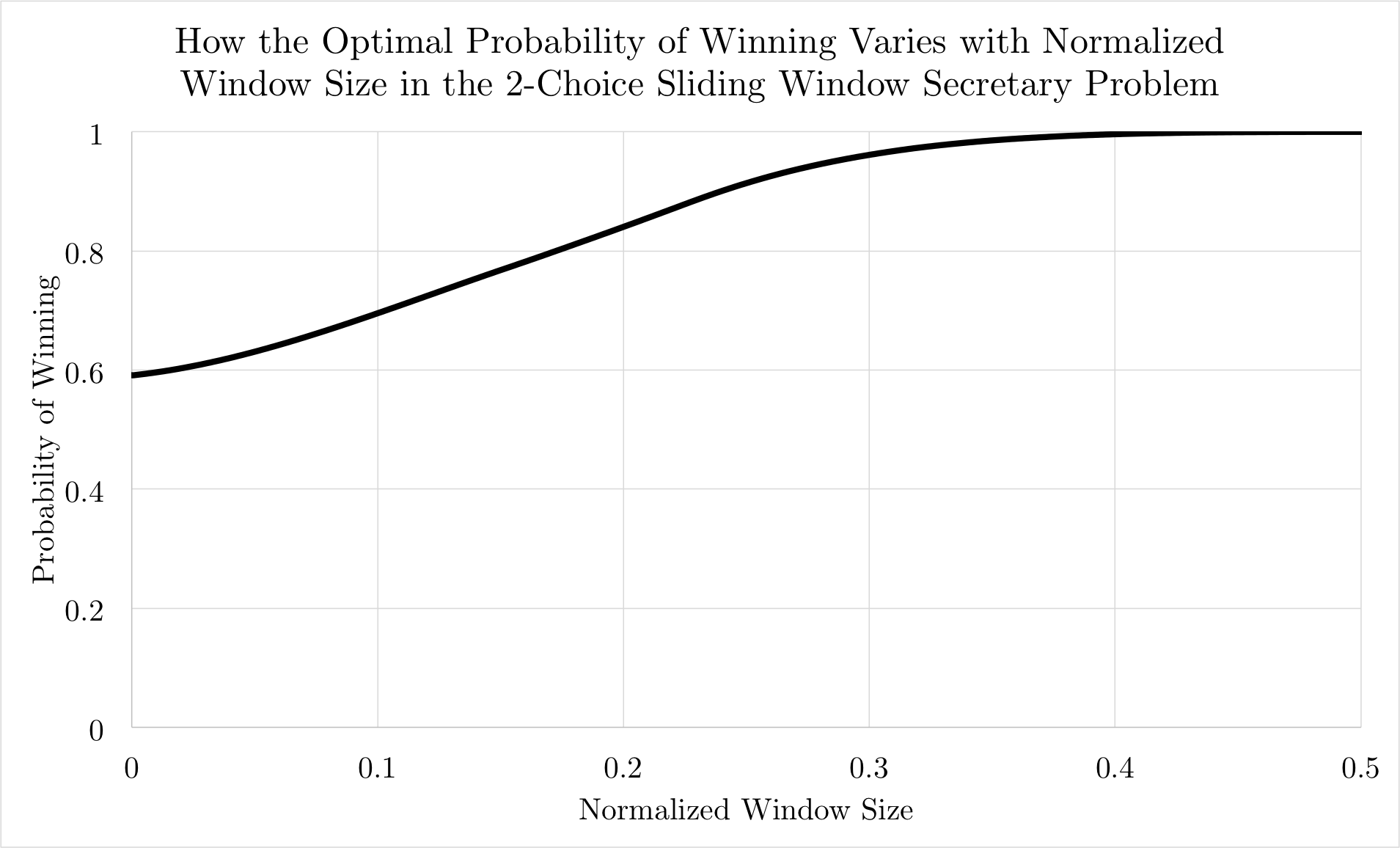}
\caption{The Variation of the 2-Choice Optimal Probability of Winning with the Normalized Window Size $w=\frac{K}{N}$. For $K\geq \frac{N}{2}$, $\Pr(\Win)=1$, and thus the graph only displays probabilities of winning for $K<\frac{N}{2}$.}
\label{Fig2CP}
\end{figure}

\section{Conclusions and Directions for Future Research} \label{concluding}
We studied the Sliding-Window Secretary Problem for 3 different cases for a fixed number
of applicants: (i) choosing the best, (ii) choosing either the best or second best, and 
(iii) two choices to choose the best.
For each case, we found the maximum probability of winning for any window size, computed the optimal thresholds, and performed asymptotic analysis.

Our results naturally extend to the Top-$L$ case, where the interviewer wins
if one of the top $L$ is chosen. For future research directions, the Sliding-Window 
can also apply to the other classical Secretary problems. For example, finding the
best expected rank, in which Chow \textit{et al.} \cite {Ref3} have already found the best expected rank for a sliding window of size 1, while Goldys \cite {Ref5} has found the best expected rank for a sliding window of size 2.
Another extension is the full-information problem where the interviewer knows a cardinal score of each applicant and the probability distribution of scores, instead of just relative ranks. This variation is more applicable to realistic situations, because decisions are made not solely based on the ordinal ranks of options but the magnitude of the benefit of each option.

\section{Acknowledgments}
We thank Professor Daniel Kleitman for the original inspiration of the problem.
We would also like to thank Dr.\ Tanya Khovanova for proposing this variation of the problem. We would also like to thank Dr.\ Scott Kominers, Shashwat Kishore, Dr.\ John Rickert, Siddhartha Jena, Richard Yip, and Dr.\ Andrew Charman for providing feedback on the paper. We finally would like to thank the Research Science Institute and the Center for Excellence in Education for giving us the opportunity to perform this research in the Department of Mathematics at the Massachusetts Institute of Technology.

\newpage
\begin{singlespace}
\bibliographystyle{plain}
\bibliography{main.bib}
\end{singlespace}

\newpage
%
\appendix
\gdef\thesection{Appendix \Alph{section}}
\renewcommand{\thetheorem}{\Alph{section}\arabic{theorem}}
\renewcommand{\thecorollary}{\Alph{section}\arabic{corollary}}
\renewcommand{\thelemma}{\Alph{section}\arabic{lemma}}

\newpage
 
\gdef\thesection{Appendix \Alph{section}}
\renewcommand{\thetheorem}{\Alph{section}\arabic{theorem}}
\renewcommand{\thecorollary}{\Alph{section}\arabic{corollary}}
\renewcommand{\thelemma}{\Alph{section}\arabic{lemma}}

\section{The Classical Secretary Problem}  \label{Ap0}

We present Gilbert and Mosteller's proofs of the optimal strategy for the secretary problem.

\begin {theorem} [Gilbert and Mosteller (1966) \cite{Ref2}] \label{ThmImpt}
The solution can be restricted to the strategy in which for some integer $d^{*} \geq 1$, the interviewer rejects the first $d^{*}$ applicants and chooses the next applicant who is better than the first $d^{*}$ applicants \cite{Ref2}.

\end{theorem}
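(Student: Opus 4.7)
The plan is to mirror the argument of Theorem \ref{Cor1} but specialized to the classical case $K=1$, where the sliding rule reduces to the statement that only applicants who are the best among those seen so far---the \emph{candidates}---need to be considered. First I would argue that it suffices to restrict attention to candidates: if applicant $i$ is not the best of the first $i$ applicants, then $\Pr(\Win \mid \text{choose }i)=0$, so choosing a non-candidate is strictly dominated by rejecting it. Hence any optimal strategy can be taken to only ever accept candidates, and the decision at each step reduces to the binary choice ``accept this candidate or reject it.''

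Next I would compute the two relevant conditional probabilities. Conditioning on applicant $i$ being a candidate, the probability that $i$ is in fact the best overall equals $\Pr(\Win \mid \text{choose candidate } i) = \frac{i}{N}$, by symmetry of the uniform random permutation (the position of the best applicant is uniform on $[1,N]$, so the best of the first $i$ is the overall best with probability $i/N$). On the other hand, $\Pr(\Win \mid \text{reject candidate } i)$ is the probability, given that applicant $i$ is a candidate, that some later candidate is both the overall best and would be selected by the optimal continuation strategy applied from step $i+1$ onward. Regardless of the exact continuation rule, this quantity is bounded above by $\Pr(\text{best applicant lies in }[i+1,N]) = \frac{N-i}{N}$, which is strictly decreasing in $i$, and a more careful bookkeeping shows it is genuinely decreasing in $i$.

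With these two monotonicity facts in hand---$\Pr(\Win \mid \text{choose candidate } i)$ strictly increasing and $\Pr(\Win \mid \text{reject candidate } i)$ decreasing---the set $S$ of indices $i$ for which the inequality analogous to \eqref{InEq} holds is an upward-closed interval of $[1,N]$. To see that $S$ is nonempty, note that when $i=N$ the last applicant, if a candidate, must be the best overall, so $\Pr(\Win\mid\text{choose})=1$ while $\Pr(\Win\mid\text{reject})=0$. Hence $S$ has a minimum element, which we call $d^{*}+1$. The optimal strategy is then to reject the first $d^{*}$ applicants outright (the only candidates that might appear among them lie in $S^{c}$ and so should be rejected) and to accept the first candidate thereafter, which is exactly the claimed form.

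The main obstacle is the rigorous verification that $\Pr(\Win\mid\text{reject candidate }i)$ is nonincreasing in $i$; one has to argue that the \emph{optimal} continuation value decreases, not merely the naive upper bound. The clean way to handle this is a backward induction on $i$: let $V(i)$ denote the optimal probability of winning starting from step $i+1$ given that the running maximum is among the first $i$ applicants; then $V(i)=\max\{\Pr(\text{applicant }i{+}1\text{ is a candidate})\cdot\Pr(\Win\mid\text{choose }i{+}1),\,V(i+1)\}$ with $V(N)=0$, and a direct induction shows $V(i)$ is nonincreasing in $i$. This closes the gap and completes the threshold characterization.
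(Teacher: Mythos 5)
Your proposal follows essentially the same route as the paper's proof: compute $\Pr(\Win \mid \mbox{choose candidate } i) = \frac{i}{N}$ (strictly increasing), argue that the value of rejecting candidate $i$ is decreasing in $i$, and conclude that the set of indices where accepting beats rejecting is an upward-closed interval whose least element defines $d^{*}+1$. The differences are cosmetic --- the paper establishes nonemptiness at $i=N-1$ rather than $i=N$, and your backward-induction justification that the optimal continuation value is nonincreasing is in fact more careful than the paper's one-line assertion of that monotonicity.
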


We present the proof of Theorem \ref{ThmImpt} to demonstrate the methods used to prove the optimal form of the strategy for the Sliding Window Secretary Problems.

\begin{proof} \label{hello}
We shall define a \textit{candidate} as an applicant such that if the interviewer chooses to accept this applicant, the probability of winning is strictly nonzero. 

Suppose we reach a candidate with index $i$. Because candidate $i$ is better than all previous applicants, the probability that candidate $i$ is the best applicant is $\frac{i}{N}$.

Now consider the probability of winning with the optimal strategy given that the interviewer rejects candidate $i$. The probability of winning given that candidate $i$ is rejected decreases as $i$ increases, because the larger $i$ is, the more likely that the best applicant is at $i$. 

The interviewer only chooses applicant $i$ if $i$ is a candidate and if the following inequality holds.
\begin{equation} \label{Eq0}
\Pr(\Win \mid \mbox{Choosing Candidate } i) > P(\Win \mid \mbox{Skipping Candidate } i)
\end{equation} 

Because there is a $\frac{1}{N}$ chance that applicant $N$ is better than applicant $N-1$ if applicant $N-1$ is a candidate, $\Pr(\Win \mid \mbox{Skipping Candidate } N-1) =\frac{1}{N}$. Therefore, $\Pr(\Win \mid \mbox{Choosing Candidate } N-1) > \Pr(\Win \mid \mbox{Skipping Candidate } N-1)$. Because the probability of winning given that candidate $i$ is chosen strictly increases in $i$, the probability of winning given that candidate $i$ is rejected decreases in $i$, there is a greatest integer $d^{*} \in [0,N-1]$ such that Inequality \ref{Eq0} only holds after $d^{*}$.
\end{proof}

We now find the optimal threshold $d^{*}$ for large $N$, as seen in Gilbert and Mosteller (1966)

\begin {theorem} [Gilbert and Mosteller (1966) \cite{Ref2}]
For $N$ large, $d^{*}\approx \frac{N}{e}$.
\end{theorem}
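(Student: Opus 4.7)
The plan is to obtain a closed-form expression for $\Pr(\Win \mid d)$ for a fixed threshold $d$, pass to the $N \to \infty$ limit with $x = d/N$ held fixed, and then optimize the resulting continuous function. By the optimal form established in Theorem \ref{ThmImpt}, the interviewer wins exactly when two independent conditions hold: the overall best applicant sits at some index $i > d$, and the best among the first $i-1$ applicants sits among the first $d$. Conditioning on the index $i$ of the best applicant (each index equally likely with probability $1/N$), and using symmetry to compute $\Pr(\text{best of first } i-1 \text{ is in first } d) = d/(i-1)$, I would write
\begin{equation*}
\Pr(\Win \mid d) \;=\; \frac{d}{N}\sum_{i=d+1}^{N}\frac{1}{i-1}\;=\;\frac{d}{N}\sum_{j=d}^{N-1}\frac{1}{j}.
\end{equation*}

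Next, I would let $x = d/N$ and approximate the harmonic sum by an integral for large $N$, since $\sum_{j=d}^{N-1} 1/j = \ln(N/d) + O(1/d)$. Substituting gives
\begin{equation*}
\Pr(\Win \mid d) \;\approx\; -x\ln x,
\end{equation*}
a smooth function of $x$ on $(0,1)$ whose maximum I would find by elementary calculus: differentiating yields $-\ln x - 1 = 0$, so the unique critical point is $x = 1/e$, and the second derivative $-1/x < 0$ confirms it is a maximum. Translating back, the optimal threshold satisfies $d^{*}/N \to 1/e$, i.e., $d^{*} \approx N/e$.

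The routine steps are the combinatorial derivation of $\Pr(\Win \mid d)$ and the calculus step. The genuinely delicate point is the passage to the limit: one has to argue that the discrete optimum $d^*$ (an integer) tracks the continuous optimum $N/e$. I would handle this by noting that $\Pr(\Win \mid d)$ as a function of integer $d$ is unimodal (the ratio of consecutive values $\Pr(\Win \mid d+1)/\Pr(\Win \mid d)$ crosses $1$ exactly once as $d$ increases, because the increments $\sum_{j=d}^{N-1}1/j$ are strictly decreasing in $d$), so the discrete maximizer lies at the integer nearest to the solution of $\sum_{j=d}^{N-1} 1/j = 1$. Since that harmonic-sum equation gives $d \sim N/e$ by the same logarithmic approximation, the conclusion $d^{*} \approx N/e$ follows, with error $o(N)$, completing the proof.
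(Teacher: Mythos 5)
Your proposal is correct and follows essentially the same route as the paper: condition on the index of the best applicant, compute $\Pr(\Win \mid d) = \frac{d}{N}\sum_{j>d}\frac{1}{j-1}$, approximate by $-x\ln x$ with $x = d/N$, and maximize to get $x = 1/e$. The only difference is that you add a (correct, and welcome) unimodality argument justifying that the discrete maximizer tracks the continuous one, a point the paper glosses over.
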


\begin{proof}
Let $d$ be an arbitrary threshold value, and $d^{*}$ be the value of $d$ that maximizes the probability of winning. Let $\Pr(\Win \mid j)$ be the probability of finding the best-ranked applicant if $j$ is the index of the best-ranked applicant. Then $\Pr(\Win \mid j)=0$ for $j\leq d$, and because the probability of finding the best-ranked applicant after $d$ is equal to the probability of not finding an applicant better than the first $d$ applicants in the first $j-1$ applicants, $\Pr(\Win \mid j)=\frac{d}{j-1}$ for $j>d$. 

Each value of $j$ occurs with probability $\frac{1}{N}$, and thus by the total probability theorem, $$\Pr(\Win)=\sum\limits_{j=1}^N \frac{\Pr(\Win \mid j)}{N}= \frac{1}{N} \sum\limits_{j=d+1}^N \left(\frac{d}{j-1}\right)$$.

$\Pr(\Win)$ can be converted to an integral for $N$ large:
$$\Pr(\Win) \approx \frac{d}{N}  \int_{\frac{d}{N}}^{1} \frac{1}{t} dt.$$

So $\Pr(\Win) \approx -\frac{d}{N} \log (\frac{d}{N})$. The value of $d$ that maximizes $\Pr(\Win)$ is $\frac{N}{e}$. So $d^{*} = \frac{N}{e}$, and $\Pr(\Win) = \frac{1}{e}$ \cite{Ref2}.
\end{proof}

\newpage

\newpage
\gdef\thesection{Appendix \Alph{section}} 

\gdef\thesection{Appendix \Alph{section}} 

\section{Optimal Threshold Values for Various Numbers of Applicants and Window Sizes: Choosing the Best Applicant}  \label{Ap1}

\begin{table} [h]
\begin{center}

\begin{tabular}{|l|l|l|l|}\hline
Number of  & Window & Optimal & Probability \\ 
Applicants & Size & Threshold & of Win  \\ \hline \hline
$6$ & $2$ & $1$ & $0.5611$ \\ \hline
$6$ & $3$ & $0$ or $1$ & $0.7167$\\ \hline \hline
$7$ & $2$ & $2$ & $0.5321$\\ \hline
$7$ & $3$ & $1$ & $0.6690$ \\ \hline
$7$ & $4$ & $0$ & $0.8114$\\ \hline \hline
$8$ & $2$ & $2$ & $0.5089$\\ \hline
$8$ & $3$ & $1$ & $0.6199$ \\ \hline
$8$ & $4$ & $0$ or $1$ & $0.7405$\\ \hline
$8$ & $5$ & $0$ & $0.8655$\\ \hline \hline
$9$ & $2$ & $3$ & $0.4880$ \\ \hline
$9$ & $3$ & $2$ & $0.5741$\\ \hline
$9$ & $4$ & $1$ & $0.6988$\\ \hline
$9$ & $5$ & $0$ & $0.8099$ \\ \hline
$9$ & $6$ & $0$ & $0.8988$\\ \hline \hline
$10$ & $2$ & $3$ & $0.4774$ \\ \hline
$10$ & $3$ & $2$ & $0.5634$\\ \hline
$10$ & $4$ & $1$ & $0.6566$\\ \hline
$10$ & $5$ & $0$ or $1$ & $0.7544$ \\ \hline
$10$ & $6$ & $0$ & $0.8544$\\ \hline
$10$ & $7$ & $0$ & $0.9210$\\ \hline
\end{tabular}

\end{center}
\caption{Optimal Values for The Threshold Value $d^{*}$ Given the Number of Applicants $N$ and Window Size $K$, along with Respective Probabilities of Success $\Pr(\Win)$}
\label{somenumbers}
\end{table}

\newpage

\gdef\thesection{Appendix \Alph{section}} 

\section{Probabilities of Success Given a Threshold $d$ for $100$ Applicants and a Window Size of $2$: Choosing the Best Applicant}  \label{Ap2}
\begin{table} [h]
\begin{center}
\begin{tabular} {|l|l|} \hline
Value of & Probability \\ 
Threshold $d$ & of Win \\ \hline
$33$ & $0.3760$ \\ \hline
$34$ & $0.3768$ \\ \hline
$35$ & $0.3773$ \\ \hline
$36$ & $0.3775$ \\ \hline
$37$ & $0.3774$ \\ \hline
\end{tabular}
\end{center}
\caption{Probabilities of Success For Certain Thresholds Given $N=100$ and $K=2$. The optimal $d^{*}$ for $K=2$ is $36$, which is the optimal $d^{*}$ for the original secretary problem.}
\label{k2}
\end{table}

\newpage

\gdef\thesection{Appendix \Alph{section}} 

\section{Values of Optimal Thresholds ($d^{*}$) for Different Window Sizes ($k$) and Large Number of Applicants ($n$)}  \label{Ap3}

\begin{table} [h]
\begin{center}
\begin{tabular} {|l|l|} \hline
Normalized & Normalized Value  \\ 
Window Size & of Optimal Threshold  \\ \hline
$0$	& $0.3679$ \\ \hline
$0.2$ & $0.2635$ \\ \hline
$0.22$&$0.2494$ \\ \hline
$0.24$ &	$0.2347$ \\ \hline
$0.26$&	$0.2193$ \\ \hline
$0.28$ &$0.2033$ \\ \hline
$0.3$ &	$0.1867$ \\ \hline
$0.32$ &	$0.1696$ \\ \hline
$0.34$&$0.1520$ \\ \hline
$0.36$ & $0.1341$ \\ \hline
$0.38$ & $0.1158$ \\ \hline
$0.4$ & $0.09716$ \\ \hline
$0.42$	& $0.07823$ \\ \hline
$0.44$ &	$0.05903$ \\ \hline
$0.46$	& $0.03958$ \\ \hline
$0.48$	& $0.0199$ \\ \hline
$0.5$ &$0$ \\ \hline
\end{tabular}
\end{center}
\caption{Normalized Threshold Values $\frac{d^{*}}{N}$ for Select Values of the Normalized Window Size $\frac{K}{N}$}
\label{k3}
\end{table}

\newpage
\gdef\thesection{Appendix \Alph{section}} 

\section{Optimal Threshold Values for Various Numbers of Applicants and Window Sizes: Choosing One of the Best Two Applicants}  \label{Ap4}

\begin{table} [h]
\begin{center}
\begin{tabular}{|l|l|l|l|l|}\hline
Number of & Window & Optimal  & Optimal  & Probability\\
Applicants & Size & Threshold 1 & Threshold 2 & of Winning \\ \hline \hline
$4$ & $2$ & $1$ & $1$ & $0.9167$ \\ \hline \hline
$5$ & $2$ & $1$ & $2$ & $0.8833$\\ \hline 
$5$ & $3$ & $1$ & $1$ & $0.9667$\\ \hline \hline
$6$ & $2$ & $1$ & $3$ & $0.8333$ \\ \hline
$6$ & $3$ & $1$ & $2$ & $0.9333$\\ \hline 
$6$ & $4$ & $1$ & $1$ & $0.9833$\\ \hline \hline
$7$ & $2$ & $2$ & $4$ & $0.7929$ \\ \hline
$7$ & $3$ & $1$ & $3$ & $0.8976$\\ \hline
$7$ & $4$ & $1$ & $2$ & $0.9571$\\ \hline 
$7$ & $5$ & $1$ & $1$ & $0.9905$ \\ \hline \hline
$8$ & $2$ & $2$ & $4$ & $0.7696$\\ \hline
$8$ & $3$ & $1$ & $3$ or $4$ & $0.8595$\\ \hline
$8$ & $4$ & $1$ & $3$ & $0.9310$ \\ \hline
$8$ & $5$ & $1$ & $2$ & $0.9702$\\ \hline \hline
$8$ & $6$ & $1$ & $1$ & $0.9940$ \\ \hline
$9$ & $2$ & $2$ & $5$ & $0.7454$\\ \hline
$9$ & $3$ & $2$ & $4$ & $0.8364$\\ \hline
$9$ & $4$ & $1$ & $3$ & $0.9052$\\ \hline
$9$ & $5$ & $1$ & $2$ & $0.9517$ \\ \hline
$9$ & $6$ & $1$ & $1$ & $0.9788$\\ \hline
$9$ & $7$ & $1$ & $1$ & $0.9960$\\ \hline
\end{tabular}
\end{center}
\caption{Optimal Values for The Thresholds Given the Number of Applicants and Window Size, along with Respective Probabilities of Success}
\label{somenumbers}
\end{table}

\end{document}